\documentclass[11pt]{amsart}
\usepackage{graphicx,amssymb,amsmath}
\usepackage{a4wide}

\def\baselinestretch{1.2}

\theoremstyle{plain}
\newtheorem{theorem}{Theorem}[section]
\newtheorem{lemma}[theorem]{Lemma}
\newtheorem{corollary}[theorem]{Corollary}
\newtheorem{proposition}[theorem]{Proposition}

\theoremstyle{definition}
\newtheorem{definition}[theorem]{Definition}
\newtheorem{example}[theorem]{Example}
\newtheorem{remark}[theorem]{Remark}

\def\case#1#2{\par\addvspace{\medskipamount}%
    \noindent \underline{\emph{Case #1. #2\,}}\par\smallskip}

\newcommand{\thickhline}{\noalign{\hrule height 1pt}}


\let\epsilon\varepsilon

\def\Z{\mathbb Z}

\def\syl{\mathrm{syl}}
\def\esyl{\mathrm{esyl}}
\def\len{\mathrm{len}}
\def\infs{{\inf{\!}_{s}}}

\let\prec\preccurlyeq


\begin{document}

\title{Unknotting number and genus of 3-braid knots}

\author{Eon-Kyung Lee and Sang-Jin Lee}
\address{Department of Mathematics, Sejong University,
    Seoul, 143-747, Korea}
\email{eonkyung@sejong.ac.kr}
\address{Department of Mathematics, Konkuk University,
    Seoul, 143-701, Korea}
\email{sangjin@konkuk.ac.kr}
\date{\today}

\begin{abstract}

Let $u(K)$ and $g(K)$ denote the unknotting number and the genus of
a knot $K$, respectively.
For a 3-braid knot $K$, we show that $u(K)\le g(K)$ holds,
and that if $u(K)=g(K)$ then $K$ is
either a 2-braid knot, a connected sum of two 2-braid knots, the
figure-eight knot, a strongly quasipositive knot or its mirror image.

\medskip\noindent
{\em Keywords\/}:
Knot;
braid group;
unknotting number;
genus.\\
{\em 2010 Mathematics Subject Classification\/}: 57M25, 57M27\\
\end{abstract}

\maketitle

\tableofcontents

\section{Introduction}\label{sec:Intro}

A \emph{knot} is a connected closed 1-manifold, smoothly embedded in the 3-sphere $S^3$.
The \emph{unknotting number} $u(K)$ of a knot $K$ is the minimum number of crossing changes
needed to transform $K$ into the unknot.
The \emph{genus} $g(K)$ (resp.\ the \emph{4-genus} $g^*(K)$)
of a knot $K$ in $S^3=\partial B^4$
is the minimum genus over all orientable surfaces in $S^3$
(resp.\ in $B^4$) with boundary $K$.
For a knot $K$, both $u(K)$ and $g(K)$ are bounded
from below by $g^*(K)$:
$$
g^*(K)\le u(K)\quad\mbox{and}\quad g^*(K)\le g(K).
$$
However, there is no such inequality between $u(K)$ and $g(K)$ in general.
There are knots whose genus is greater than its unknotting number,
and vice versa.

In this paper we study the relationship between the unknotting number and
the genus of 3-braid knots. We first establish the following theorem.

\begin{theorem}\label{thm:main}
If $K$ is a 3-braid knot,
then
$$g^*(K)\le u(K)\le g(K).$$
\end{theorem}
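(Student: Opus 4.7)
The left inequality $g^*(K)\le u(K)$ holds for every knot (a sequence of $u(K)$ crossing changes yields a cobordism in $B^4$ from $K$ to the unknot of genus $u(K)$, which caps off to a slice surface), so the substantive content of Theorem~\ref{thm:main} is the bound $u(K)\le g(K)$ for 3-braid knots. My plan is to write $K$ as the closure $\widehat\beta$ of a 3-braid $\beta$, place $\beta$ in a canonical normal form dictated by the structure of $B_3$, and in each case exhibit an explicit unknotting sequence of length at most $g(K)$.

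For the normal form I would use Murasugi's classification of conjugacy classes in $B_3$: since the centre is generated by $\Delta^2=(\sigma_1\sigma_2)^3$, every 3-braid is conjugate to $\Delta^{2k}\beta_0$, where $\beta_0$ is either trivial, a power of a single generator, or an alternating word $\sigma_1^{p_1}\sigma_2^{-q_1}\cdots\sigma_1^{p_s}\sigma_2^{-q_s}$ with $p_i,q_i>0$. In each class the genus of $\widehat\beta$ can be computed or sharply bounded: in the 2-braid-like subcases via the familiar torus/2-bridge formulas, and in the alternating case via the Bennequin surface together with sharpness results (coming from the Alexander polynomial or from Stallings' fibration criterion, since closed 3-braids in this form are fibered). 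Case by case, one designs crossing changes that kill one syllable of $\beta_0$ at a time and, when needed, absorb or cancel the central factor $\Delta^{2k}$, then verifies that the total count does not exceed $g(K)$.

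The main difficulty will be the non-quasipositive alternating regime in which neither $\beta$ nor its mirror is strongly quasipositive: here the Bennequin inequality is not sharp, the genus-realising surface differs from the obvious Seifert surface coming from the braid word, and one has less algebraic structure to exploit. In that regime each crossing change must be chosen so that its effect on the hidden minimal surface is both controlled and efficient — neither overshooting the genus bound nor leaving residual crossings — and the bookkeeping must be maintained across conjugation moves and handle slides. Balancing the count of crossing changes against a carefully chosen genus-realising surface, while tracking how the central exponent $k$ governs the transitions into the torus-knot and figure-eight extremal cases singled out in the abstract, is the heart of the argument.
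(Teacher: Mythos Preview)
Your proposal is a plan rather than a proof, and the place where you explicitly defer the work---the ``non-quasipositive alternating regime''---is exactly where the content lies. Saying that one will ``design crossing changes that kill one syllable at a time'' and ``absorb or cancel the central factor $\Delta^{2k}$'' is not an argument until you say which crossing changes, and why the total count is bounded by the genus. In the Murasugi form $\Delta^{2k}\beta_0$ with $k\ne 0$, the Seifert surface coming from the braid word is typically \emph{not} minimal, so you have no direct handle on $g(K)$ from that word; your own description of the difficulty concedes this. (Your side remark that the relevant 3-braids are fibred is also not safe in general and in any case does not by itself produce an unknotting bound.)

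The paper avoids all of this by switching to the band-generator presentation and using Xu's shortest-word theory together with Bennequin's theorem: a shortest band-generator word in the conjugacy class gives a minimal-genus Seifert surface, so $g(K)=|W|/2-1$ exactly. The key step (Proposition~\ref{prop:Waa}) is that every nontrivial 3-braid knot has a shortest representative of the form $W a_1^{\pm 2}$. One crossing change deletes the $a_1^{\pm 2}$, producing a 3-braid knot $L$ with $g(L)\le |W|/2-1=g(K)-1$, and a one-line induction on $g$ gives $u(K)\le g(K)$. There is no case analysis over Murasugi types, no tracking of the central exponent, and no need to identify the extremal knots at this stage; those only enter later, in the proof of Theorem~\ref{thm:equal}. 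If you want to salvage your approach, the missing ingredient is precisely an analogue of Proposition~\ref{prop:Waa}: a normal form in which a single crossing change visibly drops the genus by one. The band-generator/Garside machinery is what makes such a statement provable.
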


We remark that the inequality $u(K)\le g(K)$ does not
hold for knots with braid index $\ge 4$.
There is a knot with unknotting number 2 and genus 1,
given by Livingston~\cite[Appendix]{ST88}.
According to the database \textsc{KnotInfo}
of Cha and Livingston~\cite{CL},
there are 43 knots with braid index 4 and with crossing number $\le 9$,
and among them there are 6 knots with $u(K)>g(K)$
as listed in Table~\ref{tab:knotinfo}.

\begin{table}[t]
\let\s\sigma
$$
\begin{array}{*5c}\thickhline
\mbox{$K$} & \mbox{$g^*(K)$} & \mbox{$u(K)$} & \mbox{$g(K)$}
& \mbox{Braid representatives}\\\thickhline
7_4    &1 & 2 & 1 & \s_1\s_1\s_2\s_1^{-1}\s_2\s_2\s_3\s_2^{-1}\s_3\\\hline
9_{10} &2 & 3 & 2 & \s_1\s_1\s_2\s_1^{-1}\s_2\s_2\s_2\s_2\s_3\s_2^{-1}\s_3\\\hline
9_{13} &2 & 3 & 2 & \s_1\s_1\s_1\s_1\s_2\s_1^{-1}\s_2\s_2\s_3\s_2^{-1}\s_3\\\hline
9_{38} &2 & 3 & 2 & \s_1\s_1\s_2\s_2\s_3^{-1}\s_2\s_1^{-1}\s_2\s_3\s_3\s_2\\\hline
9_{46} &0 & 2 & 1 & \s_1\s_2^{-1}\s_1\s_2^{-1}\s_3\s_2\s_1^{-1}\s_2\s_3\\\hline
9_{49} &2 & 3 & 2 & \s_1\s_1\s_2\s_1\s_1\s_3^{-1}\s_2\s_1^{-1}\s_2\s_3\s_3\\\thickhline
\end{array}
$$
\caption{4-braid knots $K$ with $u(K)>g(K)$ and crossing number $\le 9$.}
\label{tab:knotinfo}
\end{table}

\medskip

A braid is said to be \emph{strongly quasipositive} if it can be written as
a positive word in band generators. (See \S2 for the definition of band generators.)
A knot which is the closure of such a braid is called
a \emph{strongly quasipositive knot}.
It is known by Rudolph~\cite[Corollary]{Rud93} that $g^*(K) = g(K)$
holds for strongly quasipositive knots $K$ with arbitrary braid index.
Therefore we have the following corollary.

\begin{corollary}\label{cor:SQP}
If $K$ is a strongly quasipositive 3-braid knot, then
$$g^*(K)=u(K)=g(K).$$
\end{corollary}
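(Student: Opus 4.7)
The plan is to read this off as a sandwich of the two results already invoked: Theorem~\ref{thm:main} and Rudolph's equality $g^*(K)=g(K)$ for strongly quasipositive knots, cited just before the corollary. Since the hypothesis of the corollary makes both results applicable to the same knot $K$, there is essentially nothing left to do beyond chaining the inequalities correctly.

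First I would note that any strongly quasipositive 3-braid knot $K$ satisfies the hypothesis of Theorem~\ref{thm:main}, so
\[
g^*(K)\le u(K)\le g(K).
\]
Next I would apply Rudolph's result~\cite{Rud93}, which gives $g^*(K)=g(K)$ for every strongly quasipositive knot, without any restriction on the braid index. Combining these two facts, the outer terms of the displayed chain coincide, forcing $u(K)$ to coincide with them as well; thus $g^*(K)=u(K)=g(K)$.

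Since the entire argument is a one-line sandwich, the only ``obstacle'' worth mentioning is making sure the hypotheses align: one needs $K$ to be both a 3-braid knot (to apply Theorem~\ref{thm:main}) and strongly quasipositive (to apply Rudolph's theorem), and both conditions are built into the statement of the corollary. No new technical input is required, and in particular nothing about the specific band-generator presentation of the braid is needed at this stage.
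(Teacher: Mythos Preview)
Your argument is correct and matches the paper's reasoning exactly: the corollary is stated immediately after citing Rudolph's equality $g^*(K)=g(K)$ for strongly quasipositive knots, with the phrase ``Therefore we have the following corollary,'' so the intended proof is precisely the sandwich of Theorem~\ref{thm:main} with Rudolph's result that you give.
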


We remark that the equality $g^*(K)=u(K)=g(K)$ holds
for braid positive knots with arbitrary braid index,
but \emph{not} for strongly quasipositive knots and
\emph{not} for positive knots with braid index $\ge 4$.
Recall that a knot is \emph{positive} if it has a diagram
consisting of only positive crossings
and that a knot is \emph{braid positive} if it is a closure of a braid which can
be written as a positive word in Artin generators.
Thus a braid positive knot is a positive knot.
It was shown independently by Rudolph~\cite{Rud99} and Nakamura~\cite{Nak00}
that positive knots are strongly quasipositive.
Hence we have the following implications.
$$
\mbox{braid positive} \quad\Longrightarrow\quad
\mbox{positive} \quad\Longrightarrow\quad
\mbox{strongly quasipositive}
$$

Because $g^*(K) = g(K)$ holds for strongly quasipositive knots~\cite{Rud93},
it also holds for positive knots and braid positive knots.
Rasmussen also proved the equality $g^*(K)=g(K)$ for positive knots
by using his invariant~\cite{Ras10}.

Stoimenow~\cite{Sto03} proved that
$u(K)= g(K)$ holds for braid positive knots $K$,
using an inequality of Boileau-Weber-Rudolph~\cite{BW84,Rud83}.
Therefore $g^*(K)=u(K)=g(K)$ holds for braid positive knots
with arbitrary braid index.

For knots in Table~\ref{tab:knotinfo} except $9_{46}$,
we have $g^*(K)=g(K)<u(K)$.
By a straightforward computation,
we can see that these knots are strongly quasipositive 4-braid knots.
Moreover, they are positive knots.
Therefore the equality in Corollary~\ref{cor:SQP}
does not hold for positive knots
(and hence not for strongly quasipositive knots)
with braid index $\ge 4$.

\medskip

Our last theorem shows that for 3-braid knots
the equality $u(K) = g(K)$ holds only for special cases.
Therefore the strict inequality $u(K)< g(K)$ holds for generic 3-braid knots.

\begin{theorem}\label{thm:equal}
Let $K$ be a 3-braid knot.
If $u(K)=g(K)$, then one of the following holds:
\begin{enumerate}
\item Either $K$ or its mirror is strongly quasipositive,
hence $K$ is represented either by a positive word or by a negative word in band generators;
\item $K$ is the figure-eight knot,
hence $K$ is represented by $\sigma_1^{-1}\sigma_2\sigma_1^{-1}\sigma_2$;
\item $K$ is a 2-braid knot,
hence $K$ is represented by $\sigma_1^p\sigma_2^\epsilon$
where $p$ is an odd integer and $\epsilon=\pm 1$;
\item $K$ is a connected sum of two 2-braid knots,
hence $K$ is represented by $\sigma_1^p\sigma_2^q$
where $p$ and $q$ are odd integers.
\end{enumerate}
\end{theorem}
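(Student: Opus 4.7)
The plan is to argue by contrapositive: assuming the 3-braid knot $K$ falls into none of the four listed categories, I will exhibit a sequence of crossing changes of length strictly less than $g(K)$, thereby forcing $u(K)<g(K)$ and contradicting the hypothesis $u(K)=g(K)$.

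The first step is to bring $K$ to a canonical form as a closed 3-braid---for instance a Xu/Birman-Ko-Lee normal form in the band generators, or a Murasugi/Schreier normal form. Based on the signs of the exponents in this form, I partition the conjugacy classes into four groups: purely positive (resp.\ purely negative) normal forms yield strongly quasipositive braids and give case~(i), possibly after mirroring; words supported on only one band generator, or in a pattern that manifestly splits, collapse to 2-braid knots (case~(iii)) or to connected sums of two 2-braid knots (case~(iv)); and a short, explicit list of mixed-sign normal forms represents the figure-eight knot (case~(ii)). The remaining conjugacy classes, which I will call the \emph{generic mixed-sign classes}, are the ones I must rule out.

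For a generic mixed-sign normal form, I compute $g(K)$ from a Bennequin-type Seifert surface built directly from the normal form, so that the genus is determined by the word length, the number of Seifert circles, and the signs. I then construct an explicit unknotting sequence by performing one crossing change at each band generator whose sign is in the minority; after cancellation this turns $K$ into a strongly quasipositive (or trivial) 3-braid knot of strictly smaller genus. Applying Corollary~\ref{cor:SQP} and Theorem~\ref{thm:main} to the resulting simpler braid, together with the count of crossing changes performed, gives $u(K)\le g(K)-1$, which is the strict inequality sought.

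The main obstacle is showing that such a saving of at least one crossing change is genuinely available in every generic class; equivalently, that the only obstructions to such a saving are exactly the four listed families. This requires a careful class-by-class analysis of the short normal forms, and in particular a delicate treatment of configurations in which a single sign flip reduces the word length and the genus by the same amount---these are precisely the patterns that produce the figure-eight knot, the 2-braid knots, and the connected sums of two 2-braid knots, and one must verify that no further exceptional families hide among the short words. Handling these borderline normal forms, rather than the generic bulk, is where the real work of the proof lies.
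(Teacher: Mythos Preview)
Your outline is a plan rather than a proof, and the mechanism you propose for producing the strict inequality does not work as stated. You suggest flipping each minority-sign band generator so that the word becomes strongly quasipositive ``after cancellation''. But changing $a_i^{-1}$ to $a_i$ does not shorten the word, so the banded-surface genus bound does not drop; and even if the resulting knot $L$ is strongly quasipositive with $u(L)=g(L)$, you have only shown $u(K)\le(\text{number of flips})+g(L)$, which in general is $g(K)$, not $g(K)-1$. The extra unit of saving has to be produced by a specific cancellation, and you have not identified one.

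The paper's proof hinges on exactly this point. The key observation (Lemma~\ref{lem:pf}) is that if $K$ is represented by a \emph{shortest} word of the shape $a_1a_2^{2k}a_1^{-1}W$, then deleting $a_2^{2k}$ costs $k$ crossing changes but drops the word length by $2k+2$, since $a_1a_1^{-1}$ cancels. Hence $u(K)\le k+g(L)\le k+|W|/2-1=g(K)-1$. The hard work is then to show that every 3-braid knot outside the four listed families has a conjugate with such a shortest word (possibly after inverting). This is done via Lemma~\ref{lem:B} and Corollary~\ref{cor:general}, which translate the existence of the pattern $a_1a_2^{2k}a_1^{-1}$ into concrete subword conditions on the Garside normal form $\delta^{-m}P$ (such as $a_1a_2^{2k}a_3\prec P$ or $a_1^2a_2\cdots a_{2k-1}a_{2k}^2\prec P$ together with suitable inequalities on $m$), and then a rather intricate case analysis on $\syl(\alpha)$ and $\inf(\alpha)$---with the inverse braid $\alpha^{-1}$ brought in to handle the cases where $m$ is large relative to $|P|$. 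The borderline cases that survive this analysis are precisely the four families in the statement. Your sketch correctly anticipates that the short normal forms are where the difficulty lies, but it does not supply the crucial $a_1a_2^{2k}a_1^{-1}$ mechanism, nor any of the structural lemmas needed to force it.
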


We remark that the converse of the above theorem is an open question.
It is obvious that $u(K)=g(K)$ holds for the knots of types (i), (ii) and (iii).
(In fact, the equality $g^*(K)=u(K)=g(K)$ holds for these knots.)
However, it is unknown whether the equality holds for the knots of type (iv).
This leads to the following question.

\medskip\noindent
\textbf{Question.}\ \
Is it true that if $K$ is the closure of the 3-braid $\sigma_1^{2p+1}\sigma_2^{-2q-1}$
with $p,q\ge1$ then $u(K)=p+q$?

\medskip

This is a special case of the long-standing conjecture that
the unknotting number is additive under connected sum, which is still open.
If the above question has an affirmative answer,
then the converse of Theorem~\ref{thm:equal} is also true,
which gives a complete classification of 3-braid knots
with unknotting number equal to genus.

In the above question, if $p=q$, then the knot $K$ is a ribbon knot, hence $g^*(K)=0$.
Therefore, in this case, the 4-genus $g^*(K)$ does not give any information
about the unknotting number of $K$.
Up to 10 crossings, there are four knots exclusively of type (iv), namely,
$3_1\sharp !3_1$,
$3_1\sharp\, !5_1$,
$3_1\sharp\, !7_1$ and
$5_1\sharp\, !5_1$,
which are represented by the braids
$\sigma_1^3\sigma_2^{-3}$,
$\sigma_1^3\sigma_2^{-5}$,
$\sigma_1^3\sigma_2^{-7}$ and
$\sigma_1^5\sigma_2^{-5}$, respectively.
Here $!K$ denotes the mirror image of $K$ with reverse orientation.
The unknotting number of $3_1\sharp\, !3_1$ is 2:
the unknotting number is at most 2 because
the knot can be transformed into the unknot by 2 crossing changes,
and it is at least 2 because the unknotting number of a composite knot is at least 2
by a result of Scharlemann~\cite{Sch85}.
However, according to the table of unknotting numbers of composite knots
up to 10 crossings given by Stoimenow in~\cite[Appendix I]{Sto04},
the unknotting numbers of the knots
$3_1\sharp\, !5_1$,
$3_1\sharp\, !7_1$ and
$5_1\sharp\, !5_1$ are unknown.

The above question was also asked by Abe, Hanaki and Higa in~\cite{AHH12}.
In that paper, they showed that if $K$ is a knot with $u(K)=(c(K)-2)/2$,
then $K$ is either the figure-eight knot, a positive 3-braid knot,
a negative 3-braid knot or a connected sum of two 2-braid knots.
Here $c(K)$ denotes the crossing number of $K$
and a positive (resp.\ negative) 3-braid knot means the closure of a 3-braid which is
represented by a positive (resp.\ negative) word in Artin generators.
An affirmative answer to the above question gives
a complete classification of the knots with $u(K)=(c(K)-2)/2$.

\medskip
We close this section by explaining briefly the ideas of the proofs of
Theorems~\ref{thm:main} and~\ref{thm:equal}.

The proof of Theorem~\ref{thm:main} is based on the following:
any nontrivial 3-braid knot is represented by a word
$Wa_1^{\pm 2}$ (in band generators) that is a shortest word
in its conjugacy class.
Similar arguments were used by Ni \cite{Ni09} and Stoimenow \cite{Sto06}
in studying the fibredness of 3-braid knots.

Analyzing shortest words of 3-braids more carefully, we obtain the following:
if $K$ is a 3-braid knot other than those listed in Theorem~\ref{thm:equal},
then $K$ is represented by a word
$a_1a_2^{2k}a_1^{-1}W$ (in band generators) that is a shortest word in its conjugacy class.
This is the key idea of the proof of Theorem~\ref{thm:equal}.

\section{The 3-braid group $B_3$}
The braid groups $B_n$, $n\ge 2$, have the presentation
$$
B_n  =  \left\langle \sigma_1 ,\ldots, \sigma_{n-1} \left|
\begin{array}{l}
\sigma_i\sigma_j=\sigma_j\sigma_i\quad  \mbox{if $|i-j|\ge 2$}, \\
\sigma_i\sigma_j\sigma_i=\sigma_j\sigma_i\sigma_j\quad  \mbox{if $|i-j|=1$}.
\end{array}
\right.\right\rangle.
$$
This presentation is called the \emph{Artin presentation}
and the generators $\sigma_i$ are called \emph{Artin generators}.
In the late nineties,
Birman, Ko and Lee~\cite{BKL98} introduced a then-new presentation,
which we call the {\em dual presentation},
$$
B_n  =  \left\langle a_{ij}, \ 1\le j < i\le n \left|
\begin{array}{l}
a_{ij}a_{kl}=a_{kl}a_{ij} \quad \mbox{if $(k-i)(k-j)(l-i)(l-j)>0$}, \\
a_{ij}a_{jk}=a_{jk}a_{ik}=a_{ik}a_{ij} \quad \mbox{if $1\le k<j<i \le n$}.
\end{array}
\right.\right\rangle.
$$
The generators $a_{ij}$ are called {\em band generators}.
They are related to Artin generators by
$a_{ij}=\sigma_{i-1}\cdots\sigma_{j+1}\sigma_j\sigma_{j+1}^{-1}
\cdots\sigma_{i-1}^{-1}$.
See Figure~\ref{fig:gen}(a).

Let $B_n^+$ denote the monoid generated by band generators.
Elements of $B_n^+$ are called \emph{strongly quasipositive braids}~\cite{Rud93}.
In this paper, we simply call them \emph{positive braids}.
If a knot is the closure of a positive braid,
it is called a \emph{strongly quasipositive knot}.
For example, the knot $9_{49}$ in Table~\ref{tab:knotinfo} is a strongly quasipositive
4-braid knot because the knot is represented by the braid
$\let\s\sigma \s_1\s_1\s_2\s_1\s_1\s_3^{-1}\s_2\s_1^{-1}\s_2\s_3\s_3$
which is conjugate to
\let\s\sigma
\begin{align*}
(\s_3 &\s_1^{-1})\, \s_1\s_1\s_2\s_1\s_1\s_3^{-1}\s_2
\s_1^{-1}\s_2\s_3\s_3  \,(\s_3^{-1}\s_1 )\\
&=\s_3\s_1\s_2\s_1\s_1\s_3^{-1}\s_2\s_1^{-1}\s_2\s_3\s_1\\
&=\s_1(\s_3\s_2\s_3^{-1})\s_1\s_1\s_2 (\s_1^{-1}\s_2\s_1)\s_3\\
&=a_{21}a_{42}a_{21}a_{21}a_{32}a_{31}a_{43}.
\end{align*}
In the above, we use the identities $\sigma_3\sigma_1=\sigma_1\sigma_3$,
$\sigma_i=a_{(i+1)i}$,
$\sigma_3\sigma_2\sigma_3^{-1}=a_{42}$ and
$\sigma_1^{-1}\sigma_2\sigma_1=\sigma_2\sigma_1\sigma_2^{-1}=a_{31}$.
Similarly, all the knots in Table~\ref{tab:knotinfo} except $9_{46}$ are
strongly quasipositive 4-braid knots.

\begin{figure}[t]
$$
\begin{array}{*6{c}}
\includegraphics[scale=.4]{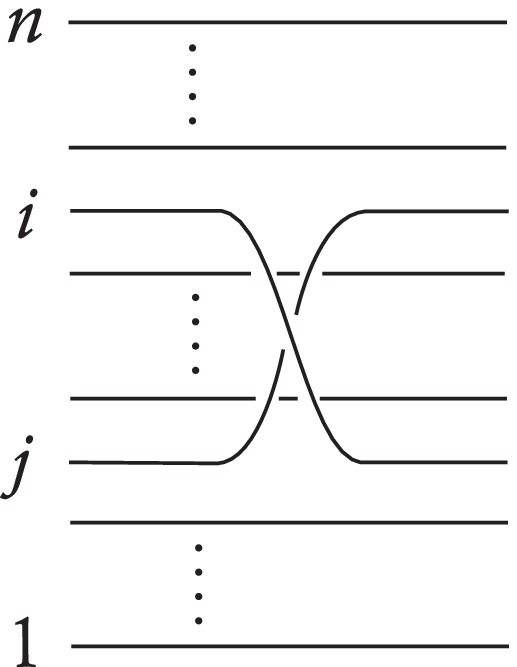}  &\quad&
\includegraphics[scale=.55]{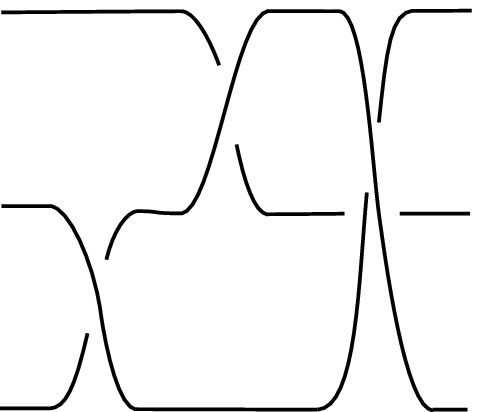} &\quad&
\includegraphics[scale=.8]{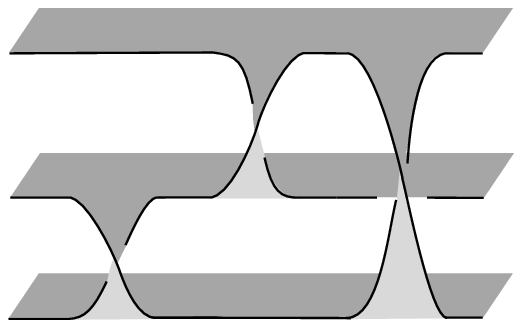} \\[2ex]
\mbox{(a) Band generator $a_{ij}$} &&
\mbox{(b) 3-braid $a_1a_2^{-1}a_3$} &&
\mbox{(c) Banded surface for $a_1a_2^{-1}a_3$}
\end{array}$$
\caption{}\label{fig:gen}
\end{figure}

For the 3-braid group $B_3$, we use the notations
$a_1 = a_{21}$, $a_2 = a_{32}$ and $a_3 = a_{31}$.
Then the dual presentation becomes
$$
B_3  =  \left\langle\, a_1, a_2, a_3 \mid
a_2 a_1 = a_3 a_2 = a_1 a_3
\,\right\rangle.
$$
This presentation was first studied by Xu~\cite{Xu92}.
The generators $a_i$ are related to Artin generators by
$a_1=\sigma_1$, $a_2=\sigma_2$ and $a_3=\sigma_2\sigma_1\sigma_2^{-1}$.
From now on, we use a convention of taking modulo 3
for the indices of the generators
$a_1, a_2, a_3$.
For example, $a_{-2} = a_1 = a_4$.

\begin{definition}[word length, syllable length
and nondecreasing positive words]
Let
$$W = a_{i_1}^{k_1} a_{i_2}^{k_2} \cdots a_{i_r}^{k_r}$$
be a word in band generators of $B_3$,
where $k_j\ne 0$ for all $j$ and $i_j\ne i_{j+1}$ for $j=1,\ldots,r-1$.
The \emph{syllable length} and \emph{word length} of $W$ are defined
respectively as
$$
\syl(W)=r\quad\mbox{and}\quad |W|=|k_1|+\cdots+|k_r|.
$$
For a 3-braid $\alpha$, let
$$|\alpha|=\min\{\,|W|:\mbox{$W$ is a word in band generators representing $\alpha$}\,\}.$$
If each $k_j \ge 1$, we call $W$ a {\em positive word}.
Notice that a positive word represents a positive braid.
If there is no confusion,
we will not distinguish between a word and the braid represented by the word.
A positive word is said to be \emph{nondecreasing} if the indices
of the generators in the word are nondecreasing, that is,
$i_{j+1}=i_j+1$ for $j=1,\ldots,r-1$.
For example, $a_1a_2^2a_3a_1^3$ is a
nondecreasing positive word of word length 7 and syllable length 4.
\end{definition}

We denote the closure of a braid $\alpha$ by $\hat\alpha$.
If $\alpha$ and $\beta$ are conjugate,
then $\hat\alpha=\hat\beta$.
If $\beta=\alpha^{-1}$, then $\hat\beta$ is the mirror image of $\hat\alpha$
with reverse orientation,
in particular, $g(\hat\alpha)=g(\hat\beta)$ and $u(\hat\alpha)=u(\hat\beta)$.

\begin{definition}[banded surface]
Let $W$ be a word in band generators representing a 3-braid $\alpha$.
The closure $\hat\alpha$ has a Seifert surface
which consists of three horizontal disks and half-twisted bands
each corresponding to a band generator or its inverse in the word $W$.
We call such a surface the {\em banded surface} for $W$, denoted by $F_W$.
See Figure~\ref{fig:gen}(b,c).
\end{definition}

The Euler characteristic of $F_W$ is $\chi(F_W)=3-|W|$.
If $W$ represents a knot, then $\partial F_W$ has one component, hence
$g(F_W)=|W|/2-1$.

\begin{theorem}[Bennequin~\cite{Ben83}]
Let $W$ be a word in band generators representing a 3-braid $\alpha$ .
If $\hat\alpha$ is a knot and
if the word length of $W$ is minimal in the conjugacy class of $\alpha$,
then $F_W$ is a minimal genus Seifert surface of $\hat\alpha$,
hence
$$
g(\hat\alpha) = g(F_W) = |W|/2-1.
$$
\end{theorem}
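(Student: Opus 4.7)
The plan is to prove the two inequalities $g(\hat\alpha)\le |W|/2-1$ and $g(\hat\alpha)\ge |W|/2-1$ separately. The upper bound is immediate: the banded surface $F_W$ is an orientable Seifert surface for $\hat\alpha$, and since $\hat\alpha$ is a knot (so $\partial F_W$ is connected), the Euler characteristic formula $\chi(F_W)=3-|W|$ (three horizontal disks joined by $|W|$ half-twisted bands) yields $g(F_W)=(1-\chi(F_W))/2=|W|/2-1$, hence $g(\hat\alpha)\le g(F_W)=|W|/2-1$.

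For the lower bound, I would use braid foliation techniques in the spirit of Bennequin's original argument. Given any Seifert surface $S$ of $\hat\alpha$ in $S^3$, put $S$ in general position with respect to the braid axis $A$ of $\hat\alpha$, so that the family of disk fibers transverse to $A$ induces a singular foliation on $S$ with only elliptic and hyperbolic singularities. After the standard simplifications (removing inessential intersection circles and arcs, cancelling pairs of singularities of opposite sign, and putting the foliation in normal form) one can read off a word $W'$ in band generators so that $S$ becomes, up to isotopy, the banded surface $F_{W'}$. Since none of these simplifications decrease $\chi$, we have $g(S)\ge g(F_{W'})=|W'|/2-1$. Moreover, the braid represented by $W'$ is conjugate to $\alpha$, so $|W'|\ge|W|$ by the minimality hypothesis, and hence $g(S)\ge|W|/2-1$. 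Taking the infimum over all Seifert surfaces $S$ of $\hat\alpha$ yields the desired lower bound.

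The main obstacle is the reduction of an arbitrary Seifert surface to banded form: one must show that any Seifert surface of a 3-braid knot can be simplified, without decreasing the Euler characteristic, into a banded surface for a word in band generators representing a braid conjugate to $\alpha$. In the 3-braid setting the small braid index constrains the possible foliation patterns substantially, and one can appeal to the detailed case analysis originating in Bennequin's work (and refined by Birman--Menasco) to carry this through. A secondary technical issue is to exclude simplifications that would reduce the braid index below 3, which could otherwise destroy the conjugacy relation with $\alpha$; here one uses the fact that for a 3-braid knot no Markov destabilization is possible without passing through a trivial case, which can be ruled out by the nontriviality of $\hat\alpha$ together with the minimality of $|W|$.
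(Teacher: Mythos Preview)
The paper does not supply a proof of this statement; it is stated as a theorem of Bennequin~\cite{Ben83} and used as a black box throughout (see also Xu~\cite{Xu92}). So there is no proof in the paper to compare your attempt against.

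That said, your outline is essentially Bennequin's original argument: the upper bound is immediate from the existence of $F_W$, and for the lower bound one takes an incompressible Seifert surface $S$ for $\hat\alpha$, foliates it by the disk fibers of the braid fibration about the axis, simplifies the singular foliation, and recognizes the result as a banded surface $F_{W'}$ for some word $W'$ representing a conjugate of $\alpha$; then $g(S)=|W'|/2-1\ge|W|/2-1$ by the minimality of $|W|$. Two minor corrections. First, in Bennequin's argument the simplifications are isotopies (one begins with an incompressible $S$, so no compressions are needed), hence $g(S)=g(F_{W'})$ exactly rather than merely $g(S)\ge g(F_{W'})$. Second, your ``secondary technical issue'' about Markov destabilization below braid index~3 is not the actual obstruction, and the appeal to nontriviality of $\hat\alpha$ is misplaced: the foliation argument works relative to the fixed braid axis and the three points of $\hat\alpha$ on each fiber disk, so one never leaves the 3-strand setting, and the theorem applies equally well when $\hat\alpha$ happens to have braid index less than~3 (e.g.\ the unknot represented by $a_1a_2$, where $|W|/2-1=0$).
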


\begin{definition}[subword order]
For positive 3-braids $P$ and $Q$, we write $P\prec Q$
if $Q=R_1PR_2$ for some $R_1,R_2\in B_3^+$.
This gives rise to a partial order on $B_3^+$.
\end{definition}

For example, $a_1a_2^2\prec a_3a_1a_2^3=a_3(a_1a_2^2)a_2$ and $a_3\prec a_2a_1=a_3a_2$.

\begin{definition}[fundamental braid and rotation automorphism]
The \emph{fundamental braid} $\delta$ is defined as $\delta = a_2a_1$.
The \emph{rotation automorphism} $\tau:B_3\to B_3$
is defined as $\tau(a_i)=a_{i+1}$ for $i=1,2,3$.
\end{definition}

It is easy to see that $a_i\delta=\delta a_{i+1}$ for all $i$,
hence $\delta^3$ is a central element and
$\tau(\alpha)=\delta^{-1}\alpha\delta$ for $\alpha\in B_3$.

\begin{definition}[Garside normal form]
Every 3-braid $\alpha$ is uniquely expressed as
$$\alpha=\delta^u P,$$
where $u$ is an integer and $P$ is a nondecreasing positive word
in band generators~\cite{Xu92}.
The above expression is called the \emph{Garside normal form} of $\alpha$.
The \emph{infimum}, \emph{supremum} and  \emph{canonical length}
of $\alpha$ are defined respectively as
$$
\inf(\alpha)=u,\quad\sup(\alpha)=u+|P|\quad\mbox{and}\quad
\len(\alpha)=|P|.
$$
The \emph{syllable length} of $\alpha$ is defined as
the syllable length of the positive word $P$:
$$
\syl(\alpha)=\syl(P).
$$
The \emph{extended syllable length} of $\alpha$ is defined as
$$
\esyl(\alpha)=\inf(\alpha)+\syl(\alpha).
$$
\end{definition}

Infimum, supremum, canonical length and (extended) syllable length
are invariant under the automorphism $\tau$.

The Garside normal form can be obtained by performing the
following transformations repeatedly to a word representing $\alpha$:
\begin{enumerate}
\item replace $a_i^{-1}$ with $\delta^{-1}a_{i+1}$;
\item replace $a_{i+1}a_i$ with $\delta$;
\item replace $\delta^k\delta^\ell$ with $\delta^{k+\ell}$;
\item replace $a_i\delta^k$ with $\delta^{k}a_{i+k}$.
\end{enumerate}

\begin{definition}[right complement]
For a positive 3-braid $P$,
the braid $P^{-1}\delta^{|P|}$ is called
the \emph{right complement} of $P$, denoted by $P^*$.
\end{definition}

The right complement $P^*$ is the positive braid such that $PP^*=\delta^{|P|}$.

\begin{lemma}\label{lem:RC}
The following hold for 3-braids.
\begin{enumerate}
\item $(P_1P_2)^*=P_2^*\cdot\tau^{|P_2|}(P_1^*)$
    for positive braids $P_1$ and $P_2$.
\item $(a_{i_1}a_{i_2}\cdots a_{i_r})^*
    =a_{i_r+2}a_{i_{r-1}+3}\cdots a_{i_1+r+1}$ for $r\ge 1$.
\item $(a_i^r)^*=a_{i+2}a_{i+3}\cdots a_{i+r+1}$ for $r\ge 1$.
\item $(a_{i+1}a_{i+2}\cdots a_{i+r})^*=a_{i+r+2}^r$ for $r\ge 1$.
\item If $P$ is a nondecreasing positive word,
    then so is $P^*$ with $|P^*|=|P|$.
\item If $\alpha=\delta^uP$ is the Garside normal form of $\alpha$, then
$\alpha^{-1}=\delta^{-\sup(\alpha)} \tau^{-\sup(\alpha)}(P^*)$
is the Garside normal form of $\alpha^{-1}$. In particular,
$$
\inf(\alpha^{-1})=-\sup(\alpha),\
\sup(\alpha^{-1})=-\inf(\alpha)\ \mbox{and}\
\len(\alpha^{-1})=\len(\alpha).
$$
\end{enumerate}
\end{lemma}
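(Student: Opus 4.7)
The plan is to prove the six parts in order, leaning on the defining identity $P^*=P^{-1}\delta^{|P|}$ together with the iterated conjugation rule $\alpha\delta^k=\delta^k\tau^k(\alpha)$, which is immediate from $\tau(\alpha)=\delta^{-1}\alpha\delta$ (so $\delta^{-k}\beta\delta^k=\tau^k(\beta)$).

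I would start with (i) by a direct manipulation: expand $(P_1P_2)^*=P_2^{-1}P_1^{-1}\delta^{|P_1|+|P_2|}$, insert $\delta^{|P_2|}\delta^{-|P_2|}$ just after $P_2^{-1}$ to expose $P_2^*$ at the front and $P_1^*$ in the middle, and push the interior $\delta^{-|P_2|}$ past $P_1^*$ by the conjugation rule to land on $\tau^{|P_2|}(P_1^*)$. For (ii), the base case $r=1$ is $(a_i)^*=a_i^{-1}\delta=a_{i+2}$, which follows instantly from the dual relations $\delta=a_2a_1=a_3a_2=a_1a_3$. The inductive step applies (i) to $(a_{i_1}\cdots a_{i_{r-1}})\cdot a_{i_r}$, yielding $a_{i_r+2}\cdot\tau\bigl((a_{i_1}\cdots a_{i_{r-1}})^*\bigr)$; since $\tau$ advances every index in the inductive formula by exactly $1$, one recovers the displayed expression. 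Parts (iii) and (iv) are immediate specializations (all $i_j=i$ for (iii); $i_j=i+j$ for (iv)).

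For (v), I would read (ii) directly on the expansion $P=a_{j_1}\cdots a_{j_n}$ into single generators: nondecreasingness of $P$ says $j_{k+1}-j_k\in\{0,1\}$ (mod $3$), and the consecutive index differences of $P^*=a_{j_n+2}\,a_{j_{n-1}+3}\cdots a_{j_1+n+1}$ compute to $1-(j_{n-k}-j_{n-k-1})\in\{0,1\}$, so $P^*$ is again nondecreasing, with $|P^*|=n=|P|$ by construction. Part (vi) then drops out: writing $\alpha^{-1}=P^{-1}\delta^{-u}=P^*\delta^{-|P|-u}=\delta^{-\sup(\alpha)}\,\tau^{-\sup(\alpha)}(P^*)$ and observing that $\tau$ preserves the class of nondecreasing positive words (it just shifts all indices cyclically), the right-hand side is already in Garside normal form, and the three identities for $\inf$, $\sup$ and $\len$ read off directly.

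The hard part is the index bookkeeping in (ii): the indices of $P^*$ run in reverse order to those of $P$ while simultaneously being shifted by an arithmetic progression $+2,+3,\ldots,+r+1$, and a sign error in either direction would propagate into (iii)--(vi). Once (ii) is set up correctly and the rule $\alpha\delta^k=\delta^k\tau^k(\alpha)$ is internalised, the rest reduces to routine verification.
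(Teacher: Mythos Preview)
Your proof is correct and follows essentially the same approach as the paper's. The only cosmetic difference is in (i): the paper verifies the defining identity $P_1P_2\cdot P_2^*\cdot\tau^{|P_2|}(P_1^*)=\delta^{|P_1P_2|}$ directly, whereas you compute $(P_1P_2)^*$ from the definition and rearrange---but these are two sides of the same one-line calculation, and parts (ii)--(vi) proceed identically (the paper in fact says less for (v), merely citing (ii), while you spell out the index-difference check).
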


\begin{proof}
(i)\ \ $(P_1P_2)^*=P_2^*\cdot\tau^{|P_2|}(P_1^*)$ because
$$
P_1P_2\cdot P_2^*\cdot\tau^{|P_2|}(P_1^*)
=P_1\cdot\delta^{|P_2|} \cdot \tau^{|P_2|}(P_1^*)
=P_1 \cdot P_1^*\cdot \delta^{|P_2|}
= \delta^{|P_1|}\delta^{|P_2|}
= \delta^{|P_1P_2|}.
$$

(ii)-(v)\ \
Observe that $a_i^*=a_{i+2}$ because $a_ia_{i+2}=\delta$.
Then (ii) follows from (i),
and (iii)-(v) follow from (ii).

\smallskip
(vi)\ \
Notice that
$$
\alpha^{-1}=P^{-1}\delta^{-u}=P^{-1}\delta^{|P|}\delta^{-u-|P|}
=P^*\delta^{-\sup(\alpha)}
=\delta^{-\sup(\alpha)} \tau^{-\sup(\alpha)}(P^*).
$$
Because $P$ is nondecreasing, so is $P^*$ by (v).
Therefore
$\alpha^{-1}=\delta^{-\sup(\alpha)} \tau^{-\sup(\alpha)}(P^*)$
is the Garside normal form of $\alpha^{-1}$.
The identities are immediate from this and (v).
\end{proof}

For example,
$(a_1^3a_2a_3a_1^7)^*
= (a_1^2\cdot \underbrace{a_1a_2a_3a_1}_4\cdot a_1^6)^*
= \underbrace{a_3a_1a_2a_3a_1a_2}_6 \cdot a_3^4\cdot  \underbrace{a_1a_2}_2$.

\begin{definition}[summit set]
For a 3-braid $\alpha$, we define
$$
\infs(\alpha)=\max\{\,\inf(\beta)\mid\mbox{$\beta$ is conjugate to $\alpha$}\}.
$$
The \emph{summit set} of $\alpha$ is defined as
$$
[\alpha]^S=\{\,\beta\in B_3 \mid
\mbox{$\beta$ is conjugate to $\alpha$ and
$\inf(\beta)=\infs(\alpha)$}\,\}.
$$
Elements of $[\alpha]^S$ are called \emph{summit elements}.
We define a subset $[\alpha]^S_0$ of $[\alpha]^S$ as
$$
[\alpha]^S_0=\left\{\,\beta\in[\alpha]^S \mid
\mbox{$\syl(\beta)$ is minimal in the conjugacy class of $\alpha$}
\,\right\}.
$$
\end{definition}

It is known that for a 3-braid $\alpha$, the sets $[\alpha]^S$ and $[\alpha]_0^S$
are finite nonempty subsets of the conjugacy
class of $\alpha$ and can be computed in a finite number of steps.
The following are equivalent for a 3-braid $\alpha$:
(i) $\inf(\alpha)$ is maximal in the conjugacy class;
(ii) $\sup(\alpha)$ is minimal in the conjugacy class;
(iii) $\len(\alpha)$ is minimal in the conjugacy class.

\begin{definition}[positive conjugate]
Let $\alpha=\delta^u P$ be the Garside normal form of $\alpha\in B_3$.
Let $P=P_1P_2$ for positive words $P_1$ and $P_2$.
Then a 3-braid $\beta$ is called a \emph{positive conjugate} of $\alpha$
if $\beta$ is either $\delta^u\tau^u(P_2)P_1$ or $\delta^u P_2\tau^{-u}(P_1)$.
\end{definition}

In the above definition, $\beta$ is a conjugate of $\alpha$ because, for example,
$\delta^u\tau^u(P_2)P_1=P_2\alpha P_2^{-1}$;
if $\alpha$ is a summit element, then so is $\beta$ because
$\inf(\beta)\ge \inf(\alpha)$ and $\inf(\alpha)$ is maximal in the conjugacy class;
if $\alpha\in[\alpha]^S_0$ and $\syl(P_1P_2)=\syl(P_1)+\syl(P_2)$
(i.e.\ the last letter of $P_1$ is different from the first letter of
$P_2$), then $\beta\in[\alpha]^S_0$.

\smallskip

Let $\alpha=\delta^u a_1^{k_1}a_2^{k_2}\cdots a_r^{k_r}$,
where $k_1,\ldots,k_r\ge 1$.
Observe the following.

\begin{itemize}
\item
If $\len(\alpha)\ge 2$ and $\esyl(\alpha)=u+r\equiv 2\bmod 3$,
then $\alpha\not\in[\alpha]^S$
because $\inf(\beta)=\inf(\alpha)+1$
for a positive conjugate
\begin{align*}
\beta
&=\delta^u \tau^u(a_r) a_{1}^{k_1}a_{2}^{k_2}\cdots a_{r-1}^{k_{r-1}}a_{r}^{k_r-1}\\
&=\delta^u a_{2}a_{1}a_{1}^{k_1-1}a_{2}^{k_2}\cdots a_{r-1}^{k_{r-1}}a_{r}^{k_r-1}\\
&=\delta^{u+1} a_{1}^{k_1-1}a_{2}^{k_2}\cdots a_{r-1}^{k_{r-1}}a_{r}^{k_r-1}.
\end{align*}

\item
If $\syl(\alpha)\ge 2$ and $\esyl(\alpha)=u+r\equiv 1\bmod 3$,
then $\alpha\not\in[\alpha]^S_0$ because
$\syl(\beta)=\syl(\alpha)-1$
for a positive conjugate
\begin{align*}
\beta
&=\delta^u \tau^u(a_{r}^{k_r}) a_{1}^{k_1}a_{2}^{k_2}\cdots a_{r-1}^{k_{r-1}}\\
&=\delta^u a_{1}^{k_r}a_{1}^{k_1}a_{2}^{k_2}\cdots a_{r-1}^{k_{r-1}}\\
&=\delta^{u} a_{1}^{k_1+k_r}a_{2}^{k_2}\cdots a_{r-1}^{k_{r-1}}.
\end{align*}
Moreover, $\beta\in[\alpha]^S_0$ by the following lemma because
$\esyl(\beta)
= u+(r-1) \equiv 0\bmod 3$.
\end{itemize}

\begin{lemma}[\cite{Xu92,KL99}]\label{lem:xu}
Let $\alpha$ be a 3-braid.
\begin{enumerate}
\item
$\alpha\in[\alpha]^S$ if and only if $\alpha^{-1}\in[\alpha^{-1}]^S$.
\item
$\alpha\in[\alpha]^S$ if and only if\/ $\len(\alpha)\le 1$
or $\esyl(\alpha)\not\equiv 2\bmod 3$.
\item
Let $\alpha\in[\alpha]^S$. Then
$\alpha\in[\alpha]^S_0$ if and only if\/
$\syl(\alpha)\le 1$
or $\esyl(\alpha)\equiv 0\bmod 3$.
\end{enumerate}
\end{lemma}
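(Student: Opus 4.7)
The plan is to handle the three parts in order, using $\tau$-invariance to normalize $\alpha=\delta^{u}a_1^{k_1}a_2^{k_2}\cdots a_r^{k_r}$ with indices read mod $3$, and to exploit the equivalence ``$\inf$ maximal iff $\sup$ minimal iff $\len$ minimal'' stated just before the definition of positive conjugate.

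For (i), Lemma~\ref{lem:RC}(vi) gives $\inf(\beta^{-1})=-\sup(\beta)$ for every conjugate $\beta$ of $\alpha$, so maximizing $\inf$ over the conjugacy class of $\alpha^{-1}$ is the same as minimizing $\sup$ over that of $\alpha$; by the said equivalence, both are equivalent to $\alpha\in[\alpha]^S$.

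For (ii) and (iii), the forward (contrapositive) directions are precisely the two bulleted observations immediately preceding the lemma. The easy portion of the backward direction is handled by hand: if $\alpha=\delta^u$ then $\alpha$ is central and trivially belongs to both $[\alpha]^S$ and $[\alpha]^S_0$; if $\alpha=\delta^u a_i$ then $\alpha$ is not a power of $\delta$ (as one sees by comparing exponent sums in the abelianization), so every conjugate $\gamma$ satisfies $\len(\gamma),\syl(\gamma)\ge 1$, making both quantities minimal at $\alpha$.

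The substantive part is to show that when $\len(\alpha)\ge 2$ and $\esyl(\alpha)\not\equiv 2\bmod 3$ no positive conjugate increases $\inf$, and (assuming $\alpha\in[\alpha]^S$) that when $\syl(\alpha)\ge 2$ and $\esyl(\alpha)\equiv 0\bmod 3$ no positive conjugate decreases $\syl$. My approach is a direct mod-$3$ analysis of the two types of positive conjugate $\delta^u\tau^u(P_2)P_1$ and $\delta^u P_2\tau^{-u}(P_1)$. For the first type, $\inf(\beta)>u$ forces $\tau^u(P_2)P_1$ to begin with some $\delta=a_{k+1}a_k$; combining the nondecreasing pattern $i_{j+1}\equiv i_j+1\bmod 3$ with the commutation $a_i\delta^u=\delta^u a_{i+u}$ pins this down to $u+r\equiv 2\bmod 3$, and any attempt to absorb a $\delta$ deeper inside the word either yields two consecutive equal-index letters or re-derives the same congruence. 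A parallel computation—this time comparing the last letter of $\tau^u(P_2)$ with the first letter of $P_1$—shows that syllable merging under positive conjugation requires $u+r\equiv 1\bmod 3$. Both congruence obstructions match the statement.

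The main obstacle is the passage from ``no positive conjugate works'' to ``no conjugate at all works''. The needed input is the Garside-theoretic fact implicit in the normal-form theory of~\cite{Xu92,KL99} that, in $B_3$, iterating positive conjugation is sufficient to reach $[\alpha]^S$ and then $[\alpha]^S_0$ from any starting element, so stabilization under these moves certifies membership. Granted this principle, the two congruence obstructions above complete both backward directions of the lemma.
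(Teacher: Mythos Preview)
The paper does not supply its own proof of this lemma: it is stated with citations to \cite{Xu92,KL99}, and the only supporting material in the paper is the pair of bulleted observations immediately preceding the statement, which establish precisely the forward (contrapositive) implications of (ii) and (iii). So there is no in-paper argument against which to compare your proposal.

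As for your proposal itself: part (i) is correct and clean, and your forward directions of (ii) and (iii) correctly point to the two bullets. The backward directions, however, are not self-contained. Your mod-$3$ analysis shows that a \emph{single} positive conjugate cannot raise $\inf$ when $\esyl\not\equiv 2$, and cannot lower $\syl$ when $\esyl\equiv 0$; but that alone does not certify membership in $[\alpha]^S$ or $[\alpha]^S_0$. You acknowledge this and invoke the Garside principle that iterated positive conjugation reaches these sets. For $[\alpha]^S$ this is the standard cycling/decycling theorem, so the appeal is legitimate. For $[\alpha]^S_0$, though, minimal syllable length is not a Garside-theoretic invariant, and connectivity of the summit set by positive conjugates does not by itself rule out a path that first \emph{increases} $\syl$ (which your own analysis shows can happen when $\esyl\equiv 0$, via a mid-syllable split) and later decreases it below the starting value. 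Closing that gap requires exactly the structural description of the summit set in \cite{Xu92,KL99} that the paper is citing; so your ``proof'' of (iii) ultimately rests on the same references the paper invokes, rather than replacing them.
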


\begin{lemma}[shortest word, \cite{Xu92}]\label{lem:short}
Let $\alpha=\delta^u P$ be the Garside normal form of $\alpha\in B_3$.
\begin{enumerate}
\item
If $u\ge 0$, then $(a_2a_1)^u P$ is a shortest word for $\alpha$.

\item
If $-|P|<u<0$, then
$(P_1^*)^{-1}P_2$ is a shortest word for $\alpha$,
where $P_1$ and $P_2$ are positive words such that $P=P_1P_2$ and $|P_1|=-u$.
In particular, $|\alpha|=|P|$.

\item
If $u\le -|P|$, then $(a_2a_1)^{-\ell}(P^*)^{-1}$ is a shortest word for $\alpha$,
where $\ell=-u-|P|$.

\item If $\alpha$ is a summit element, then $\alpha$ has the shortest
word length in its conjugacy class, that is, $|\alpha|\le|\beta|$
whenever $\beta$ is conjugate to $\alpha$.

\item If $\alpha$ is a summit element and $\hat\alpha$ is a knot,
then $g(\hat\alpha)=|\alpha|/2-1$.
\end{enumerate}
\end{lemma}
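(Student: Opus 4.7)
The plan is to establish (i)--(iii) in two steps: first verify that the exhibited words represent $\alpha$, which is immediate from $\delta=a_2a_1$ together with the identity $PP^*=\delta^{|P|}$ for (ii) and (iii); then show that these words achieve the minimum possible word length, which is the substantive content.

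For the lower bound on $|\alpha|$, the main device is to trade every negative letter for a $\delta^{-1}$ and collect all $\delta^{-1}$'s on the left. Given any word $W$ representing $\alpha$ with $m_+$ positive and $m_-$ negative letters, I apply the substitution $a_i^{-1}\to\delta^{-1}a_{i+1}$ to each negative letter and then push each $\delta^{-1}$ to the left via $a_j\delta^{-1}=\delta^{-1}a_{j-1}$. The result is $\alpha=\delta^{-m_-}Q$ for a positive word $Q$ with $|Q|=m_++m_-=|W|$. Since the BKL relations preserve word length in $B_3^+$, the positive braid $[Q]$ has well-defined word length equal to $|Q|$; writing its Garside form as $\delta^k R$ with $k\ge 0$ and $R$ nondecreasing gives $|Q|=2k+|R|$. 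Comparing $\alpha=\delta^{k-m_-}R$ with the Garside normal form $\alpha=\delta^u P$ forces $R=P$ and $k-m_-=u$, so $|W|=2k+|P|$, and the constraints $k,m_-,m_+\ge 0$ amount to $k\ge\max(0,u,-u-|P|)$. Minimizing over $k$ recovers exactly the word lengths of the representatives exhibited in (i), (ii), and (iii).

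Statement (iv) then follows from the compact formula
\[
|\gamma|\;=\;\max\bigl(|e(\gamma)|,\ \len(\gamma)\bigr),
\]
which is a direct consequence of the case analysis above; here $e\colon B_3\to\Z$, $a_i\mapsto 1$ is the exponent-sum homomorphism. Since $e$ is a conjugacy invariant and $\len$ is minimized on the summit set within the conjugacy class of $\alpha$, the inequality $|\alpha|\le|\beta|$ for every conjugate $\beta$ is immediate. Statement (v) is then a direct consequence of (iv) and Bennequin's theorem: a summit element $\alpha$ has the shortest word length in its conjugacy class, so its banded surface $F_W$ (for any shortest word $W$ for $\alpha$) is a minimal genus Seifert surface of $\hat\alpha$, giving $g(\hat\alpha)=|\alpha|/2-1$.

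The main obstacle will be the lower bound in case (ii). The naive estimate from the exponent-sum homomorphism, $|W|\ge|e(\alpha)|=\bigl|2u+|P|\bigr|$, is strictly weaker than $|P|$ whenever $-|P|<u<0$, so one cannot avoid the rewriting argument above. Its force comes from two structural features of the dual presentation that make $B_3^+$ a length-graded Garside monoid: the BKL relations preserve word length, and the Garside normal form of each braid is unique.
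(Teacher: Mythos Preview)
The paper does not prove this lemma; it is quoted from Xu~\cite{Xu92} without argument. Your proof is correct and self-contained, and it is essentially Xu's original argument recast in clean form.

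A couple of remarks on points you left implicit. First, the step ``comparing $\alpha=\delta^{k-m_-}R$ with $\alpha=\delta^uP$ forces $R=P$ and $k-m_-=u$'' uses the uniqueness of the Garside normal form, which is exactly the statement that nondecreasing positive words form a cross-section of $B_3^+$; this is stated in the paper (Definition of Garside normal form) but is worth flagging as the hinge of the argument. Second, your constraint analysis is right: from $m_-=k-u$ and $m_+=|W|-m_-=k+u+|P|$ one gets $k\ge\max(0,u,-u-|P|)$, and the three explicit words in (i)--(iii) realize $k=u$, $k=0$, $k=-u-|P|$ respectively, so the bound is sharp in each regime.

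The formula $|\gamma|=\max(|e(\gamma)|,\len(\gamma))$ is a nice way to package (i)--(iii), and your derivation of (iv) from it is clean: the paper explicitly records (just before the definition of positive conjugate) that on the summit set $\inf$ is maximal, $\sup$ is minimal, and $\len$ is minimal simultaneously, which is what you need. Statement (v) is then immediate from Bennequin, as you say.
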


\section{Proof of Theorem~\ref{thm:main}}
\label{sec:thm1}

\begin{lemma}\label{lem:A1}
Let $\alpha$ be a 3-braid such that $\hat\alpha$ is a knot and $\syl(\alpha)=1$.
Then $\alpha$ is of the form $\alpha=\delta^u a_i^{2p}$
for some $p\ge 1$.
Moreover, if $\alpha\in[\alpha]^S$, then
$\esyl(\alpha)\equiv 0\bmod 3$.
\end{lemma}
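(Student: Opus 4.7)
The plan is to work with the Garside normal form $\alpha=\delta^u P$, where $P$ is a nondecreasing positive word. Since $\syl(\alpha)=\syl(P)=1$, the positive part $P$ consists of a single syllable, so $P=a_i^k$ for some $i\in\{1,2,3\}$ and some exponent $k\ge 1$. This immediately yields the shape $\alpha=\delta^u a_i^k$; the remaining work is to pin down the parity of $k$ and then the residue of $u$ modulo $3$.

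For the first assertion, I would pass to the symmetric group via the natural projection $\pi\colon B_3\to S_3$. Here $\pi(\delta)$ is a $3$-cycle (since $\delta=a_2a_1=\sigma_2\sigma_1$ and $(2\,3)(1\,2)=(1\,3\,2)$), while $\pi(a_i)$ is a transposition for every $i$ (as $a_i$ is conjugate to a single Artin generator). Thus
$$
\pi(\alpha)=\pi(\delta)^u\,\pi(a_i)^k
$$
has the same parity as $k$. Because $\hat\alpha$ is a knot, $\pi(\alpha)$ must be a $3$-cycle, in particular an even permutation; this forces $k$ to be even, and since $k\ge 1$ we may write $k=2p$ with $p\ge 1$.

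For the second assertion, I would combine this parity constraint with Lemma~\ref{lem:xu}(ii). Since $\len(\alpha)=|P|=2p\ge 2$ and $\alpha\in[\alpha]^S$, Lemma~\ref{lem:xu}(ii) rules out $\esyl(\alpha)\equiv 2\bmod 3$. The only remaining possibility to eliminate is $\esyl(\alpha)\equiv 1\bmod 3$, i.e.\ $u\equiv 0\bmod 3$ (recall $\esyl(\alpha)=u+1$). But when $k$ is even and $3\mid u$, the permutation $\pi(\alpha)=\pi(\delta)^u\pi(a_i)^k$ is the identity, so $\hat\alpha$ has three components, contradicting the hypothesis that it is a knot. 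Hence $\esyl(\alpha)\equiv 0\bmod 3$.

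I do not anticipate a real obstacle: the whole argument is a one-two punch pairing the shape imposed by the Garside normal form with a parity/cycle-structure count in $S_3$. The only mild subtlety is to use $\len(\alpha)\ge 2$ to access the nontrivial branch of Lemma~\ref{lem:xu}(ii), after which the knottedness of $\hat\alpha$ eliminates both forbidden residues of $\esyl(\alpha)$ modulo $3$.
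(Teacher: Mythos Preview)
Your proposal is correct and follows essentially the same approach as the paper: both arguments use the permutation image in $S_3$ to force $k$ even, then invoke Lemma~\ref{lem:xu}(ii) together with the observation that $u\equiv 0\bmod 3$ would make the induced permutation trivial. Your parity shortcut for the first step is a slight cosmetic variant of the paper's case-by-case check of $a_1$, $\delta a_1$, $\delta^{-1}a_1$, but the overall structure is identical.
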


\begin{proof}
Applying $\tau$ if necessary, we may assume that $\alpha=\delta^u a_1^k$ for some $k\ge 1$.
If $k\equiv 1\bmod 2$, then the induced permutation of $\alpha$ is the same as
that of $a_1$, $\delta a_1$ or $\delta^{-1}a_1$,
hence it has two cycles.
This contradicts that $\hat\alpha$ is a knot.
Therefore $k=2p$ for some $p\ge 1$, hence
$$\alpha=\delta^u a_1^{2p}.$$

Suppose $\alpha\in[\alpha]^S$.
Since $\len(\alpha)=2p\ge 2$, $\esyl(\alpha)\equiv 0,1\bmod 3$ by Lemma~\ref{lem:xu}.
If $\esyl(\alpha)\equiv 1\bmod 3$, then
$u=\inf(\alpha)=\esyl(\alpha)-\syl(\alpha)\equiv 1-1\equiv 0\bmod 3$.
Thus the induced permutation of $\alpha$ is the identity,
which contradicts that $\hat\alpha$ is a knot.
Therefore $\esyl(\alpha)=0\bmod 3$.
\end{proof}

\begin{lemma}\label{lem:A}
Let $\alpha$ be a 3-braid such that $\hat\alpha$ is a knot.
Suppose that $\alpha\in[\alpha]^S_0$ and $\syl(\alpha)\ne 0$.
Let $\alpha=\delta^u a_1^{k_1}a_2^{k_2}\cdots a_r^{k_r}$
be the Garside normal form.
Then
\begin{enumerate}
\item $\esyl(\alpha)\equiv 0\bmod 3$;
\item $k_i\equiv 0\bmod 2$ for some $1\le i\le r$, in particular $k_i\ge 2$.
\end{enumerate}
\end{lemma}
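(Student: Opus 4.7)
For part (i), I plan to combine the just-proved Lemma~\ref{lem:A1} with the characterization of summit sets in Lemma~\ref{lem:xu}. Since $\alpha\in[\alpha]^S_0\subseteq[\alpha]^S$ and $\syl(\alpha)\ge1$, we have $\len(\alpha)\ge1$. If $\syl(\alpha)=1$, then Lemma~\ref{lem:A1} forces $\alpha=\delta^u a_i^{2p}$ with $\esyl(\alpha)\equiv0\pmod3$, giving the claim directly. If $\syl(\alpha)\ge2$, then $\len(\alpha)\ge2$, so Lemma~\ref{lem:xu}(ii) rules out $\esyl(\alpha)\equiv2\pmod3$, and Lemma~\ref{lem:xu}(iii), applied to $\alpha\in[\alpha]^S_0$ with $\syl(\alpha)\ge2$, rules out $\esyl(\alpha)\equiv1\pmod3$; only $\esyl(\alpha)\equiv0\pmod3$ remains.

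For part (ii), my plan is to argue by contradiction via the natural permutation representation $\pi\colon B_3\to S_3$. After applying $\tau$ if necessary, we may assume the Garside normal form of $\alpha$ begins with $a_1$ as in the statement; this preserves every hypothesis because $\tau(\alpha)$ is conjugate to $\alpha$ (so $\hat\alpha$ and the invariants $\inf$, $\syl$, $\esyl$ are unchanged). Suppose for contradiction that every $k_i$ is odd. Under $\pi$ each band generator maps to a transposition and $\pi(\delta)$ to a $3$-cycle, so $\pi(\delta^u)$ has order dividing~$3$ and, because each $k_i$ is odd, $\pi(a_i^{k_i})=\pi(a_i)$. Consequently
\[
\pi(\alpha)=\pi(\delta)^u\cdot\pi(a_1)\pi(a_2)\pi(a_3)\pi(a_1)\pi(a_2)\cdots
\]
is $\pi(\delta^u)$ times a product of $r$ transpositions whose indices cycle through $1,2,3$.

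The crucial computational step is the identity $\pi(a_1a_2a_3)=(2\,3)$, from which $\pi(a_1a_2a_3a_1a_2a_3)=e$ in $S_3$. So the value of $\pi(a_1^{k_1}\cdots a_r^{k_r})$ depends only on $r\bmod 6$; meanwhile, part~(i) gives $u+r\equiv0\pmod3$, which pins down $\pi(\delta^u)\in\{e,\pi(\delta),\pi(\delta)^{-1}\}$ from $r\bmod3$. I would then run through the six residues $r\bmod6\in\{0,1,\ldots,5\}$ and verify in each case that $\pi(\alpha)$ is either the identity (when $r$ is even) or a transposition (when $r$ is odd), but never a $3$-cycle. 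Since $\hat\alpha$ is a knot iff $\pi(\alpha)$ is a $3$-cycle, this contradicts the hypothesis, so some $k_i$ must be even, and being a positive integer it is $\ge2$.

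I do not anticipate any real obstacle: the entire argument collapses to the single identity $\pi(a_1a_2a_3)^2=e$ together with the modular constraint from part~(i), and the only mildly laborious part is the routine six-case bookkeeping in $S_3$.
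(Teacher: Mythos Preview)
Your proposal is correct. Part~(i) is identical to the paper's proof. For part~(ii) the overall strategy is the same---assume all $k_i$ are odd, reduce $\pi(\alpha)$ to $\pi(\delta^u a_1a_2\cdots a_r)$ with $u\equiv -r\bmod 3$, and show this is never a $3$-cycle---but the execution differs. The paper avoids your six-case bookkeeping by invoking the right-complement identity (Lemma~\ref{lem:RC}(iv)): since $(a_1a_2\cdots a_r)^*=a_{r+2}^{\,r}$, one has
\[
\delta^{-r}a_1a_2\cdots a_r=\bigl[(a_1a_2\cdots a_r)^*\bigr]^{-1}=a_{r+2}^{-r}
\]
as an equality in $B_3$, so the induced permutation is a power of a single transposition and hence has two or three cycles. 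Your approach is more elementary in that it works entirely inside $S_3$ and needs no Garside machinery beyond part~(i); the paper's approach is shorter and explains \emph{why} the six cases collapse, namely because the braid itself (not just its image in $S_3$) equals a power of one band generator.
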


\begin{proof}
(i)\ \
It follows from Lemma~\ref{lem:A1} (when $\syl(\alpha)=1$)
and Lemma~\ref{lem:xu} (when $\syl(\alpha)\ge 2$).

\smallskip
(ii)\ \
Assume that $k_i\equiv 1\bmod 2$  for all $1\le i\le r$.
Since $u\equiv -r\bmod 3$ by (i),
the induced permutation of $\alpha=\delta^u a_1^{k_1}\cdots a_r^{k_r}$
is the same as that of
$$\delta^{-r}a_1a_2\cdots a_r=[(a_1a_2\cdots a_r)^*]^{-1}=a_{r+2}^{-r},
$$
hence it has two or three cycles.
This contradicts that $\hat\alpha$ is a knot.
\end{proof}

\begin{corollary}\label{cor:A1}
Let $\alpha$ be a 3-braid such that $\hat\alpha$ is a knot and $\alpha\in[\alpha]^S$.
If\/ $\syl(\alpha)\ne 0$, then $\alpha$ is conjugate to $\delta^u P a_1^2$
where $u=\inf(\alpha)$ and $Pa_1^2$ is a nondecreasing positive word.
\end{corollary}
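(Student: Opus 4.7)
The plan is to split into two cases according to $\syl(\alpha)$ and, in each, use positive conjugation together with the rotation automorphism $\tau$ (both of which are inner, hence preserve the conjugacy class and $\inf$) to bring $\alpha$ into the required form.

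If $\syl(\alpha)=1$, Lemma~\ref{lem:A1} gives $\alpha=\delta^u a_i^{2p}$ for some $p\ge 1$, and applying $\tau^{1-i}$ yields the conjugate $\delta^u a_1^{2p}=\delta^u(a_1^{2p-2})a_1^2$, which has the required form (with $P=a_1^{2p-2}$, empty when $p=1$).

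Now suppose $\syl(\alpha)=r\ge 2$, and write the Garside normal form $\alpha=\delta^u a_{i_1}^{k_1}\cdots a_{i_r}^{k_r}$ with $i_{t+1}\equiv i_t+1\pmod 3$. By Lemma~\ref{lem:A}(ii) there exists an index $j$ with $k_j$ even, hence $k_j\ge 2$; and by Lemma~\ref{lem:A}(i) one has $u+r\equiv 0\pmod 3$, i.e.\ $u\equiv -r\pmod 3$. I first perform the positive conjugation that cyclically shifts the $j$-th syllable to the right end: taking $P_1=a_{i_1}^{k_1}\cdots a_{i_j}^{k_j}$ and $P_2=a_{i_{j+1}}^{k_{j+1}}\cdots a_{i_r}^{k_r}$, the positive conjugate is
$$
\beta=\delta^u\,\tau^u(P_2)\,P_1
=\delta^u\, a_{i_{j+1}+u}^{k_{j+1}}\cdots a_{i_r+u}^{k_r}\, a_{i_1}^{k_1}\cdots a_{i_j}^{k_j}.
$$
The congruence $u\equiv -r\pmod 3$ is exactly what makes $i_r+u+1\equiv i_1\pmod 3$, so consecutive indices in the word above still increase by $1$ modulo $3$. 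Hence $\beta$ is in nondecreasing Garside normal form, so $\inf(\beta)=u$, and its last syllable $a_{i_j}^{k_j}$ has even exponent.

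Finally, I split the last syllable as $a_{i_j}^{k_j}=a_{i_j}^{k_j-2}a_{i_j}^2$ and apply $\tau^{1-i_j}$, which shifts every band-generator index uniformly and thereby turns the trailing block $a_{i_j}^2$ into $a_1^2$ while preserving the nondecreasing property. The result is a conjugate of $\alpha$ of the form $\delta^u P a_1^2$ with $Pa_1^2$ a nondecreasing positive word, as claimed. The only nontrivial point in the whole argument is checking that the cyclic shift of syllables preserves nondecreasingness, and this is precisely what the congruence $\esyl(\alpha)\equiv 0\pmod 3$ from Lemma~\ref{lem:A}(i) delivers; everything else is bookkeeping with $\tau$ and syllable splitting.
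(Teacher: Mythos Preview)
Your argument has a gap in the case $\syl(\alpha)\ge 2$: you invoke Lemma~\ref{lem:A}, but that lemma assumes $\alpha\in[\alpha]^S_0$, whereas the hypothesis of the corollary only gives $\alpha\in[\alpha]^S$. These are genuinely different. For instance, $\alpha=\delta^{-1}a_1a_2$ has $\hat\alpha$ a knot and lies in $[\alpha]^S$ (since $\esyl(\alpha)=1\not\equiv 2\bmod 3$), yet $\esyl(\alpha)\not\equiv 0\bmod 3$ and both exponents are odd, so both conclusions of Lemma~\ref{lem:A} fail for it. Consequently your cyclic shift need not remain nondecreasing, and the existence of an even exponent is not guaranteed.

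The fix is exactly the one-line step the paper makes first: replace $\alpha$ by a conjugate lying in $[\alpha]^S_0$ (such a conjugate exists, is still a summit element, and has the same infimum $u$). After that reduction your argument goes through and is essentially identical to the paper's proof; indeed, the ``only nontrivial point'' you flag---that the congruence $\esyl(\alpha)\equiv 0\bmod 3$ makes the positive conjugation preserve nondecreasingness---is precisely what membership in $[\alpha]^S_0$ buys you via Lemma~\ref{lem:xu}(iii). Note also that after passing to $[\alpha]^S_0$ the syllable length may drop (in the example above it becomes~$1$), so your case split on $\syl(\alpha)$ should be made for the element of $[\alpha]^S_0$, not for the original $\alpha$.
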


\begin{proof}
Taking a conjugate if necessary,
we may assume $\alpha\in[\alpha]^S_0$.
Let $\alpha=\delta^u a_{j+1}^{k_1}a_{j+2}^{k_2}\cdots a_{j+r}^{k_r}$
be the Garside normal form.
By Lemma~\ref{lem:A}, $k_i\ge 2$ for some $1\le i\le r$.
Taking a positive conjugate if necessary, we may assume $k_r\ge 2$.
Applying $\tau$ if necessary, we may assume $j+r\equiv 1\bmod 3$.
Therefore $\alpha=\delta^u Pa_1^2$ for some nondecreasing positive word $Pa_1^2$.
\end{proof}

The following corollary will be used in the proof of Theorem~\ref{thm:equal}
in \S4.

\begin{corollary}\label{cor:general}
Let $\alpha$ be a 3-braid such that $\hat\alpha$ is a knot.
Suppose that the minimal syllable length
in the conjugacy class of $\alpha^{-1}$ is at least 3.
Then $\alpha$ is conjugate to
$$
\beta=\delta^u a_1^{2k} Q_1
\underbrace{a_{i+1}a_{i+2}\cdots a_{i+2p-2}}_{2p-2} Q_2
$$
with $u=\infs(\alpha)$ and $k,p\ge 1$ such that
\begin{enumerate}
\item $\beta\in[\alpha]_0^S$;
\item $Q_1$ and $Q_2$ are nondecreasing positive words,
possibly being the empty word;
\item if $Q_1$ is not the empty word,
then $Q_1$ starts with $a_2$ and ends with $a_i^2$;
\item if $Q_2$ is not the empty word,
then $Q_2$ starts with $a_{i+2p-1}^2$ and ends with $a_{3-u}$.
\end{enumerate}
In particular, if $|Q_j|\ne 0$ then $|Q_j|\ge 2$ for $j=1,2$.
\end{corollary}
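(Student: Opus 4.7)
My plan is to work inside the summit set $[\alpha]^S_0$ and produce the factorization by cyclically rearranging the Garside normal form of $\alpha$, then to extract a thin run of even length at a suitable cyclic position. The hypothesis on $\syl(\alpha^{-1})$ enters as a combinatorial lower bound on the total exponent excess of the positive part of $\alpha$.

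The first step is an explicit syllable identity. By counting syllable boundaries in the formula of Lemma~\ref{lem:RC}(ii), one checks that for any nondecreasing positive word $P$,
$$\syl(P^*) \;=\; |P| - \syl(P) + 1.$$
Applying Lemma~\ref{lem:RC}(vi) to the Garside normal form $\alpha=\delta^u P$ (after conjugating so that $\alpha \in [\alpha]^S_0$, hence $u=\infs(\alpha)$) shows that the Garside normal form of $\alpha^{-1}$ has syllable length $|P|-\syl(\alpha)+1$ and extended syllable length $\equiv 1\pmod 3$; by Lemma~\ref{lem:xu}(iii), one positive conjugation reduces the syllable length to $|P|-\syl(\alpha)$, which is then the minimum in the conjugacy class of $\alpha^{-1}$. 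So the hypothesis becomes
$$\sum_{j=1}^{r}(k_j-1) \;=\; |P|-\syl(\alpha) \;\ge\; 3,$$
where $k_1,\dots,k_r$ are the exponents of the syllables of $P$.

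Next I pick a favourable summit representative. Lemma~\ref{lem:A}(ii) provides a syllable of $P$ with even exponent $2k\ge 2$. Cyclically rotating it to the front by a positive conjugation (which stays inside $[\alpha]^S_0$ since $\esyl(\alpha)\equiv 0\pmod 3$) and then applying a power of $\tau$, I may assume the Garside normal form is $\delta^u a_1^{2k}W$, where $W$ is a nondecreasing positive word that is either empty or begins with $a_2$.

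It remains to split $W$ as $W=Q_1\cdot(a_{i+1}a_{i+2}\cdots a_{i+2p-2})\cdot Q_2$ with the stated properties. If $W$ has no fat syllable (exponent $\ge 2$), then $W=a_2a_3\cdots a_r$; the excess inequality yields $k\ge 2$, and a case analysis of the induced permutation of $\alpha$ (using that $\hat\alpha$ is a knot) forces $r$ odd, so $r-1=2p-2$ is even and I take $Q_1=Q_2=\emptyset$. If $W$ contains at least one fat syllable, I perform a further positive conjugation at a suitable sub-syllable boundary so that the thin run sandwiched between two chosen fat syllables has even length; then $Q_1$ is declared to run from the letter after $a_1^{2k}$ up to and including a fat syllable $a_i^{\ge 2}$, and $Q_2$ starts at the next fat syllable $a_{i+2p-1}^{\ge 2}$ and runs to the end of $W$. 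The start/end conditions on $Q_1$ and $Q_2$ are immediate from the nondecreasing structure of $P$, and the requirement that $Q_2$ ends with $a_{3-u}$ is a reformulation of $\syl(\alpha)\equiv -u\pmod 3$, which is itself equivalent to $\esyl(\alpha)\equiv 0\pmod 3$. The main obstacle I foresee is this final sub-step: showing that whenever $W$ contains fat syllables one can always arrange the split so that the middle thin run has even length. This will need a careful discussion of the cyclic distribution of the fat syllables of $P$, and will likely have to invoke the knot condition on $\hat\alpha$ to preclude the pathological case in which every thin run between fat syllables of $P$ has odd length.
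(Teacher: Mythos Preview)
Your proposal takes a genuinely different route from the paper's, and the gap you flag at the end is real.  You work directly in $\alpha$: after placing an even-exponent syllable $a_1^{2k}$ at the front (via Lemma~\ref{lem:A}(ii) applied to $\alpha$), you still need to locate, somewhere in the cyclic word, a thin run of \emph{even} length flanked by fat syllables.  You correctly sense that the knot condition is what rules out the all-odd case, but you do not actually carry out that argument, and a direct permutation analysis along those lines is awkward.

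The paper sidesteps this difficulty by reversing the order of operations.  It first applies Lemma~\ref{lem:A}(ii) to $\alpha^{-1}$: since the minimal syllable length of $\alpha^{-1}$ is $\ge 3$, an even-exponent syllable $a_2^{2p}$ can be placed strictly in the interior of the positive part $Q$ of $\alpha^{-1}$, so that $a_1a_2^{2p}a_3\prec Q$.  The right-complement identity (Lemma~\ref{lem:RC}) then gives
\[
(a_1a_2^{2p}a_3)^* = a_2^2\,\underbrace{a_3a_4\cdots a_{2p}}_{2p-2}\,a_{2p+1}^2,
\]
so the pattern ``two fat syllables sandwiching a thin run of even length $2p-2$'' appears \emph{automatically} in the positive part of $\alpha$.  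Only afterwards does the paper apply Lemma~\ref{lem:A}(ii) a second time, now to the resulting $\beta\in[\alpha]^S_0$, to extract the even-exponent syllable $a_1^{2k}$.  In effect, the involution $P\leftrightarrow P^*$ exchanges ``syllable of exponent $\ell$'' with ``thin run of length $\ell$'' (this is also behind your formula $\syl(P^*)=|P|-\syl(P)+1$), so the parity obstruction you are worried about is \emph{exactly} Lemma~\ref{lem:A}(ii) for $\alpha^{-1}$.  Your direct approach can be completed, but the shortest completion is precisely this complement trick --- which is the paper's argument.
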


\begin{proof}
We may assume that $\alpha^{-1}\in[\alpha^{-1}]^S_0$.
(In particular, $\alpha^{-1}\in[\alpha^{-1}]^S$, hence $\alpha\in[\alpha]^S$.)
Let $\alpha^{-1}=\delta^{\inf(\alpha^{-1})}Q$ be the Garside normal form.
By Lemma~\ref{lem:A}(ii), taking a positive conjugate if necessary,
we may assume that
$$
a_1a_2^{2p}a_3\prec Q
$$
for some $p\ge 1$.
By Lemma~\ref{lem:RC}, the Garside normal form of $\alpha$
is $\delta^{u}\tau^{u}(Q^*)$ where $u=\inf(\alpha)=-\sup(\alpha^{-1})$.
Since
$$
(a_1a_2^{2p}a_3)^*
=(a_1a_2\cdot a_2^{2p-2}\cdot a_2a_3)^*
=a_2^2 \underbrace{a_3a_4\cdots a_{2p}}_{2p-2} a_{2p+1}^2
$$
and $\tau^\ell((a_1a_2^{2p}a_3)^*)\prec Q^*$ for some $\ell\in\Z$,
$\alpha$ is of the form
$$
\alpha=\delta^u\tau^u(Q^*)=\delta^uP_1 a_i^2
(a_{i+1}a_{i+2}\cdots a_{i+2p-2})
a_{i+2p-1}^2P_2
$$
for some $i\in\Z$ and positive words $P_1$ and $P_2$.
Take a positive conjugate $\beta$ of $\alpha$ as
$$\beta=\delta^u\tau^u(a_{i+2p-1}^2P_2)P_1 a_i^2(a_{i+1}a_{i+2}\cdots a_{i+2p-2}).$$
Then $\tau^u(a_{i+2p-1}^2P_2) P_1 a_i^2
=a_{i-r+1}^{k_1}a_{i-r+2}^{k_2}\cdots a_{i}^{k_r}$,
where $i-r+1\equiv i+2p-1+u\bmod 3$
and $k_1,\ldots,k_r\ge 1$,
especially $k_1,k_r\ge 2$. (It is possible that $r=1$.)
Hence $u+r+(2p-2)\equiv 0\bmod 3$ and
$$
\beta=\delta^u a_{i-r+1}^{k_1}a_{i-r+2}^{k_2}\cdots a_{i}^{k_r}
a_{i+1}a_{i+2}\cdots a_{i+2p-2}.
$$
Since $\esyl(\beta)= u+r+(2p-2)\equiv 0\bmod 3$,
$\beta\in[\alpha]^S_0$ by Lemma~\ref{lem:xu}.
By Lemma~\ref{lem:A}, $k_q\equiv 0\bmod 2$ for some $1\le q\le r$.
Taking a positive conjugate if necessary, $\beta$ has the desired expression.
\end{proof}

\begin{proposition}\label{prop:Waa}
Let $\alpha$ be a 3-braid such that $\hat\alpha$ is a nontrivial knot.
Then $\alpha$ is conjugate to $Wa_1^{\pm 2}$ for some word $W$
such that $Wa_1^{\pm 2}$ is a shortest word in the conjugacy class of $\alpha$.
\end{proposition}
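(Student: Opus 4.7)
The strategy is to pass to a summit-element representative of $\alpha$ and apply Corollary~\ref{cor:A1} to produce a conjugate $\gamma=\delta^uPa_1^2$ whose positive part in Garside normal form ends in $a_1^2$, then read off a shortest word via Lemma~\ref{lem:short}. A preliminary reduction makes the case analysis clean: I first replace $\alpha$ by $\alpha^{-1}$ if necessary so that $\sup(\alpha)\geq 2$. This is harmless since $\hat{\alpha^{-1}}$ is again a nontrivial knot, and inverting a shortest word $Wa_1^2$ for a conjugate of $\alpha^{-1}$ and cyclically rotating gives a shortest word $W^{-1}a_1^{-2}$ for a conjugate of $\alpha$.

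Granted $\sup(\alpha)\geq 2$, the exceptional case $\syl(\alpha)=0$ gives $\alpha=\delta^u$ with $u=\sup(\alpha)\geq 2$, and the identity $\delta^2=a_1a_2a_1^2$ (which follows from $a_1\delta=\delta a_2$) lets me write
\[
\delta^u=(a_2a_1)^{u-2}(a_1a_2)a_1^2,
\]
a word of length $2u=|\delta^u|$, hence a shortest word of the form $Wa_1^2$.

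For the main case $\syl(\alpha)\geq 1$, Corollary~\ref{cor:A1} yields a conjugate $\gamma=\delta^uPa_1^2$ in Garside normal form with $u=\inf(\alpha)$ and $|Pa_1^2|=\len(\alpha)$. If $u\geq 0$, Lemma~\ref{lem:short}(i) directly gives the shortest word $(a_2a_1)^uPa_1^2$, ending in $a_1^2$. If $u<0$, the assumption $\sup(\alpha)=u+|Pa_1^2|\geq 2$ places us in the middle range $-|Pa_1^2|<u<0$ of Lemma~\ref{lem:short}, so the shortest word is $(Q_1^*)^{-1}Q_2$, where $Pa_1^2=Q_1Q_2$ with $|Q_1|=-u$. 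Since $|Q_2|=\sup(\alpha)\geq 2$ and the last two letters of $Pa_1^2$ are $a_1a_1$, the word $Q_2$ ends in $a_1^2$, and the shortest word has the desired form $Wa_1^2$.

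The main obstacle is to justify the preliminary reduction, as the boundary case $\sup(\alpha)=1$ (equivalently $|Q_2|=1$) cannot be handled directly: the shortest-word formula then splits the final $a_1^2$ across $Q_1$ and $Q_2$. The reduction follows from the following arithmetic: for summit elements, an elementary case check against the three branches of Lemma~\ref{lem:short} gives the uniform formula $|\alpha|=|{\inf(\alpha)}|+|{\sup(\alpha)}|$. If both $\sup(\alpha)\leq 1$ and $\sup(\alpha^{-1})=-\inf(\alpha)\leq 1$ (using Lemma~\ref{lem:RC}(vi)), then $-1\leq\inf(\alpha)\leq\sup(\alpha)\leq 1$, which forces $|\alpha|\leq 2$ and contradicts Bennequin's bound $|\alpha|=2g(\hat\alpha)+2\geq 4$. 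Hence at least one of $\alpha,\alpha^{-1}$ has $\sup\geq 2$, and the reduction is complete.
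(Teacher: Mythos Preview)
Your proof is correct and follows essentially the same approach as the paper: pass to a summit element, invoke Corollary~\ref{cor:A1} to arrange the Garside normal form to end in $a_1^2$, then read off a shortest word via Lemma~\ref{lem:short}, with the identity $\delta^2=a_1a_2a_1^2$ handling the $\syl=0$ case. Your treatment of the boundary case via the uniform formula $|\alpha|=|{\inf(\alpha)}|+|{\sup(\alpha)}|$ together with Bennequin's bound $|\alpha|\ge 4$ is a slight reorganization of the paper's Case~4, which instead pins down the exceptional braid as $\delta^{-1}a_1^2=a_3^{-1}a_1$ and observes directly that its closure is the unknot.
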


\begin{proof}
We may assume that $\alpha$ and $\alpha^{-1}$ are summit elements.
By Lemma~\ref{lem:short}, $|\alpha|$ is the shortest word length in the conjugacy class.

\case{1}{$\len(\alpha)=0$}
Let $\alpha=\delta^u$.
Taking $\alpha^{-1}$ if necessary, we may assume that $u\ge 0$,
hence $(a_2a_1)^u$ is a shortest word for $\alpha$ by Lemma~\ref{lem:short}.
If $u\in\{0,1\}$, then $\hat\alpha$ is either the 3-component unlink or the unknot.
Therefore $u\ge 2$.
Notice that
$$
(a_2a_1)^2 = (a_2a_1)(a_2a_1) = a_1a_2a_1^2.
$$
Let $W=(a_2a_1)^{u-2}a_1a_2$. Then $Wa_1^2$ is
a shortest word in the conjugacy class of $\alpha$.

\case{2}{$\len(\alpha)\ne 0$ and $\sup(\alpha)\ge 2$}
By Corollary~\ref{cor:A1}, we may assume that $\alpha=\delta^u Pa_1^2$,
where $u=\inf(\alpha)$ and $Pa_1^2$ is a nondecreasing positive word.

If $u\ge 0$, then $(a_2a_1)^u P a_1^2$ is a shortest word representing $\alpha$.
By taking $W=(a_2a_1)^u P$, we are done.
Suppose $u<0$. Since $\sup(\alpha)=u+|P|+2\ge 2$, we have $|P|\ge -u$.
Let $P=P_1P_2$, where $P_1$ is the prefix of $P$ of length $|P_1|=-u$.
By Lemma~\ref{lem:short}, $\alpha$ has a shortest word of the form
$\alpha=(P_1^*)^{-1}P_2a_1^2$.
By taking $W=(P_1^*)^{-1}P_2$, we are done.

\case{3}{$\len(\alpha)\ne 0$ and  $\inf(\alpha)\le -2$}
The braid $\alpha^{-1}$ satisfies the conditions of Case 2
because $\len(\alpha^{-1})=\len(\alpha)\ne 0$
and $\sup(\alpha^{-1})=-\inf(\alpha)\ge 2$.
Hence we are done.

\case{4}{$\len(\alpha)\ne 0$, $\sup(\alpha)\le 1$ and $\inf(\alpha)\ge -1$}
By Lemma~\ref{lem:A1}, $\len(\alpha)\ge 2$.
On the other hand, $\len(\alpha)=\sup(\alpha)-\inf(\alpha)\le 1-(-1)=2$.
Therefore
$$
\len(\alpha)=2,\quad\sup(\alpha)=1,\quad\inf(\alpha)=-1.
$$
By Corollary~\ref{cor:A1}, $\alpha$ is conjugate to $\delta^{-1} a_1^2$.
Since $\delta^{-1} a_1^2=(a_1a_3)^{-1}a_1^2=a_3^{-1}a_1$,
$\hat\alpha$ is the unknot.
This contradicts the hypothesis.
\end{proof}

\begin{remark}
Using an argument similar to the proof of Proposition~\ref{prop:Waa},
one can prove the following:
{\em Let $\alpha$ be a nonidentity 3-braid.
Then $\alpha$ is conjugate to $Wa_1^{\pm 2}$ for some word $W$ such that
$Wa_1^{\pm 2}$ is a shortest word in the conjugacy class,
unless $\alpha$ or $\alpha^{-1}$ is conjugate to one of}
$$a_1,\quad a_2a_1,\quad
a_2^{-1}a_1,\quad
(a_1a_2a_3)^k,\quad
a_2^{-1}(a_1a_2a_3)^k\qquad\text{for } k\ge 1.
$$
This property of 3-braids has been used in several papers.
For example, Ni~\cite{Ni09} and Stoimenow~\cite{Sto06}
used it in studying fibered 3-braid knots.
\end{remark}

\begin{proof}[Proof of Theorem~\ref{thm:main}]
Let $K$ be a 3-braid knot.
Because $g^*(K)\le u(K)$ holds for any knot,
it suffices to show that $u(K)\le g(K)$ holds.
We use induction on the genus $g(K)$.

If $g(K)=0$, then $K$ is the unknot, hence $u(K)=0=g(K)$.

Suppose that $g(K)\ge 1$.
By induction hypothesis, we assume that
$u(L)\le g(L)$ holds for any 3-braid knot $L$ with $g(L)< g(K)$.
By Proposition~\ref{prop:Waa}, $K$ is represented by a 3-braid $Wa_1^{\pm 2}$
for some word $W$ such that $Wa_1^{\pm 2}$ is a shortest word in its conjugacy class.
In particular,
$$g(K)=g(F_{Wa_1^{\pm 2}})=|Wa_1^{\pm 2}|/2-1=|W|/2.$$
Let $L$ be the 3-braid knot represented by $W$. Then
$$ g(L)\le g(F_W) = |W|/2-1=g(K)-1. $$
Since $L$ is obtained from $K$ by a single crossing change deleting $a_1^{\pm 2}$,
$$ u(K)\le u(L)+1. $$
By induction hypothesis, $u(L)\le g(L)$.
Therefore $u(K)\le u(L)+1\le g(L)+1\le g(K)$.
\end{proof}

The following example shows that the genus of a 3-braid knot cannot be bounded
above by a function of the unknotting number.

\begin{example}\label{eg}
For $k\ge 1$, let $\alpha_k=\delta^{-(k-1)}a_1a_2\cdots a_{k-1} a_k^2a_{k+1}^k a_{k+2}$.
By Lemma~\ref{lem:xu}, $\alpha_k\in[\alpha_k]^S_0$
because $\esyl(\alpha_k)=-(k-1)+(k+2)\equiv 0\bmod 3$.
Since $\delta^{-(k-1)}a_1a_2\cdots a_{k-1}=((a_1a_2\cdots a_{k-1})^*)^{-1}=a_{k+1}^{-(k-1)}$,
we have
$$
\alpha_k=a_{k+1}^{-(k-1)}a_k^2a_{k+1}^ka_{k+2}.
$$
From this expression, we can easily see that $\hat\alpha_k$ is a knot.
By Lemma~\ref{lem:short}, $a_{k+1}^{-(k-1)}a_k^2a_{k+1}^ka_{k+2}$ is a shortest word in
the conjugacy class, hence
$$g(\hat\alpha_k)=|\alpha_k|/2-1=k.$$
In particular, $\hat\alpha_k$ is a nontrivial knot.
If we delete $a_k^2$ from the above expression, $\alpha_k$ becomes $a_{k+1}a_{k+2}$
whose closure is the unknot.
Hence $u(\hat\alpha_k)=1$.
Therefore $\hat\alpha_k$ is a 3-braid knot with unknotting number 1 and genus $k$.
\end{example}

\section{Proof of Theorem~\ref{thm:equal}}
\label{sec:thm3}

\begin{lemma}\label{lem:pf}
Let $K$ be a 3-braid knot represented by a 3-braid
$$
a_1a_2^{2k}a_1^{-1}W \quad (k\ne 0)
$$
which is a shortest word in its conjugacy class.
Then $u(K)<g(K)$.
\end{lemma}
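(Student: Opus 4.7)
The plan is to exploit the block structure of the given word by performing $|k|$ crossing changes inside $a_2^{2k}$, thereby reducing $K$ to a 3-braid knot of strictly smaller genus, and then to invoke Theorem~\ref{thm:main} on that smaller knot. Writing $\alpha = a_1 a_2^{2k} a_1^{-1} W$ and noting that this word has length $2 + 2|k| + |W|$, since it is by hypothesis a shortest word in the conjugacy class of $\alpha$, Bennequin's theorem (quoted in \S2) gives
$$g(K) = (2 + 2|k| + |W|)/2 - 1 = |k| + |W|/2.$$

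Each crossing change inside the block $a_2^{2k}$ replaces a single $a_2$ by $a_2^{-1}$, and the resulting adjacent cancellation lowers the exponent by $2$. After $|k|$ such crossing changes the block is trivialized, so $K$ is transformed into the knot $L$ represented by the 3-braid $a_1 \cdot a_1^{-1}\cdot W = W$. I would next verify that $L$ is genuinely a knot: the braid $a_1 a_2^{2k} a_1^{-1}$ induces the trivial permutation (since $a_2^{2k}$ does and conjugation preserves the permutation type), so $W$ and $\alpha$ induce the same permutation, which is a 3-cycle because $K$ is a knot. Hence $L = \hat W$ is a knot and $|W| \ge 2$, since a single band generator induces a transposition. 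The banded surface $F_W$ is then connected with one boundary component and Euler characteristic $3 - |W|$, so
$$g(L) \le g(F_W) = (|W|-2)/2.$$

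Finally, Theorem~\ref{thm:main} applied to the 3-braid knot $L$ gives $u(L) \le g(L) \le (|W|-2)/2$. Combining this with the $|k|$ crossing changes used to pass from $K$ to $L$,
$$u(K) \le u(L) + |k| \le (|W|-2)/2 + |k| = g(K) - 1 < g(K).$$
There is no real obstacle in this argument. The only subtle point is confirming that $\hat W$ is genuinely a knot, which is needed both for the banded-surface genus formula and for the application of Theorem~\ref{thm:main}; this is immediate from the permutation count above.
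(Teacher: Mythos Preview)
Your proof is correct and follows essentially the same route as the paper's: perform $|k|$ crossing changes to delete the block $a_2^{2k}$, apply Theorem~\ref{thm:main} to the resulting knot $L=\hat W$, and combine the resulting inequalities. You are in fact slightly more careful than the paper in explicitly verifying that $\hat W$ is a knot via the permutation argument, a point the paper leaves implicit.
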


\begin{proof}
Taking the inverse if necessary, we may assume that $k\ge 1$.
Since $a_1a_2^{2k}a_1^{-1}W$ is a shortest word in its conjugacy class,
\begin{equation}\label{eq:gen1}
g(K)=g(F_{a_1a_2^{2k}a_1^{-1}W})
=(2k+2+|W|)/2-1=|W|/2+k.
\end{equation}
Let $L$ be the 3-braid knot represented by $W$, hence
$g(L)\le g(F_W)$.
Using Theorem~\ref{thm:main},
\begin{equation}\label{eq:gen2}
u(L)\le g(L)\le g(F_W)=|W|/2-1.
\end{equation}
Because $L$ is obtained from $K$ by $k$ crossing changes deleting $a_2^{2k}$,
\begin{equation}\label{eq:gen3}
u(K)\le u(L)+k.
\end{equation}
Combining (\ref{eq:gen1}), (\ref{eq:gen2}) and (\ref{eq:gen3}),
$$
u(K)\le u(L)+k\le |W|/2-1+k=g(K)-1<g(K).
$$

\vskip-\baselinestretch\baselineskip
\end{proof}

\begin{lemma}\label{lem:B}
Let $K$ be a 3-braid knot such that neither $K$ nor $!K$ is strongly quasipositive.
Let $K$ be represented by a 3-braid $\alpha$ such that
$\alpha$ is a summit element with Garside normal form
$$ \alpha=\delta^{-m}P. $$
\begin{enumerate}
\item
If $a_1a_2^{2k}a_3\prec P$ with $k\ge 1$ and if $m\le |P|-(2k+1)$,
then $\alpha$ is conjugate to $a_1a_2^{2k}a_1^{-1}W$
which is a shortest word in its conjugacy class.
\item
If $a_1^2 a_2\cdots a_{2k-1}  a_{2k}^2\prec P$ with $k\ge 2$ and if $m\ge 2k+1$,
then $\alpha$ is conjugate to $a_1a_2^{-2k}a_1^{-1}W$
which is a shortest word in its conjugacy class.
\item
If $a_1^2a_2^2\prec P$ and $m\ge 3$,
then $\alpha$ is conjugate to $a_1a_2^{-2}a_1^{-1}W$
which is a shortest word in its conjugacy class.
\end{enumerate}
In these cases, $u(K)<g(K)$.
\end{lemma}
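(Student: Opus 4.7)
The plan is to prove (i) directly, deduce (ii) from (i) by passing to $\alpha^{-1}$, and observe that (iii) is the special case $k=1$ of (ii). Once the shortest-word conclusion is established in any case, the inequality $u(K)<g(K)$ follows at once from Lemma~\ref{lem:pf}.

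For (i), the non-SQP hypothesis on $K$ and $!K$ forces $0<m<|P|$, so by Lemma~\ref{lem:short}(ii) we have $|\alpha|=|P|$. The key identity is $a_1 a_3=\delta$, which rewrites $a_1a_2^{2k}a_3=a_1a_2^{2k}a_1^{-1}\delta$. Using positive conjugation (which preserves the summit set) together with the rotation automorphism $\tau$, I would place the subword $a_1a_2^{2k}a_3$ at the right end of the positive part of a conjugate of $\alpha$, so that $\alpha$ is conjugate to $\delta^{-m}A\cdot(a_1a_2^{2k}a_3)\cdot B$ with $A,B$ positive words of total length $|P|-(2k+2)$. Cyclically rotating, applying the identity above, sliding $\delta$ past $B$ via $\delta B=\tau^{-1}(B)\delta$, and merging with $\delta^{-m}$ produces a conjugate $a_1a_2^{2k}a_1^{-1}\cdot\tau^{-1}(B)\cdot\delta^{-(m-1)}A$. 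Since $m\le |P|-(2k+1)$ gives $|A|\ge m-1$, I would split $A=A_1A_2$ with $|A_1|=m-1$ and use $\delta^{-(m-1)}A_1=(A_1^*)^{-1}$ from Lemma~\ref{lem:RC} to obtain $a_1a_2^{2k}a_1^{-1}W$ with $W=\tau^{-1}(B)(A_1^*)^{-1}A_2$, of total word length $2k+2+|A|+|B|=|P|=|\alpha|$, hence a shortest word in the conjugacy class.

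For (ii), I would reduce to (i) via inversion. By Lemma~\ref{lem:RC}(vi), $\alpha^{-1}$ has Garside normal form $\delta^{-m'}\tilde P$ with $m'=|P|-m$ and $\tilde P=\tau^{-m'}(P^*)$. A direct computation from Lemma~\ref{lem:RC}(ii) gives $(a_1^2a_2\cdots a_{2k-1}a_{2k}^2)^*=a_{2k+2}\,a_{2k+3}^{2k}\,a_{2k+4}$, which mod $3$ has the form $a_ia_{i+1}^{2k}a_{i+2}$. Thus $\tilde P$ contains a subword of this shape, and after conjugating $\alpha^{-1}$ by a power of $\delta$ (which applies $\tau$) the hypothesis of (i) is met for $\alpha^{-1}$; the bound $m\ge 2k+1$ translates to $m'\le |P|-(2k+1)$. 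Applying (i) yields $\alpha^{-1}\sim a_1a_2^{2k}a_1^{-1}W'$ as a shortest word, and inverting plus cyclic rearrangement produces the desired $\alpha\sim a_1a_2^{-2k}a_1^{-1}W$ of the same length. Part (iii) is the degenerate case $k=1$ of this argument: the pattern $a_1^2a_2\cdots a_{2k-1}a_{2k}^2$ collapses to $a_1^2a_2^2$ and $m\ge 2k+1$ becomes $m\ge 3$.

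The main obstacle I anticipate is the cyclic bookkeeping in (i): confirming that positive conjugation, combined with a suitable power of $\tau$, can always place the subword $a_1a_2^{2k}a_3$ at the required position of the positive part without disturbing the summit property. The hypothesis on $m$ in each part is tight: it supplies precisely the amount of positive slack in $A$ (or its counterpart for $\alpha^{-1}$) needed to absorb the $\delta^{-(m-1)}$ factor that emerges from $a_1a_3=\delta$, so that the resulting word has length exactly $|\alpha|$ and Lemma~\ref{lem:pf} delivers the conclusion.
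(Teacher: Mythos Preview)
Your approach is essentially the paper's: prove (i) directly via positive conjugation and the identity $a_3\delta^{-1}=a_1^{-1}$, reduce (ii) to (i) by passing to $\alpha^{-1}$ using the right-complement computation $(a_1^2a_2\cdots a_{2k-1}a_{2k}^2)^*=a_{2k+2}a_{2k+3}^{2k}a_{2k+4}$, and obtain (iii) by setting $k=1$ in the argument for (ii). One small correction in your bookkeeping for (i): the hypothesis $m\le|P|-(2k+1)$ only yields $|A|+|B|\ge m-1$, not $|A|\ge m-1$; the paper resolves this by using the positive conjugation step to arrange $|P_1|=m-1$ exactly (your $A$ is the paper's $P_1$, your $B$ its $P_2$), after which $\delta^{-(m-1)}P_1=(P_1^*)^{-1}$ and no further splitting is needed.
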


\begin{proof}
If $m\le 0$ (resp.\ $m\ge |P|$), then $\alpha$ (resp.\ $\alpha^{-1}$)
is a positive braid,
hence $K$ (resp.\ $!K$) is strongly quasipositive.
Therefore $1\le m\le |P|-1$, which implies
$|\alpha|=|P|$.
Since $\alpha$ is a summit element, $|\alpha|$ is the shortest
word length in the conjugacy class of $\alpha$ by Lemma~\ref{lem:short}.

\medskip
(i)\ \
Since $a_1a_2^{2k}a_3\prec P$,
$P=P_1a_1a_2^{2k}a_3 P_2$ for some positive words $P_1$ and $P_2$, hence
$$\alpha=\delta^{-m}P_1a_1a_2^{2k}a_3 P_2.$$
By hypothesis, $m\le |P|-(2k+1)=|P_1|+|P_2|+1$,
hence $|P_1|+|P_2|\ge m-1$.
Taking a positive conjugate if necessary, we may assume $|P_1|=m-1$.
Then $\alpha$ is conjugate to
\begin{align*}
\beta
&=a_1a_2^{2k}a_3P_2\cdot \delta^{-m}P_1
=a_1a_2^{2k}a_3P_2\delta^{-1}\delta^{-m+1}P_1\\
&=a_1a_2^{2k} \cdot a_3\delta^{-1}\cdot \tau^{-1}(P_2) \cdot(P_1^*)^{-1}\\
&=a_1a_2^{2k}a_1^{-1} \cdot\tau^{-1}(P_2)\cdot(P_1^*)^{-1}.
\end{align*}
Notice that the last expression has word length
$2k+2+|P_2|+|P_1|=|P|=|\alpha|$,
so it is a shortest word in its conjugacy class.

\medskip
(ii)\ \
We will show that $\alpha^{-1}$ satisfies the condition of (i).
Because $\alpha$ is a summit element, so is $\alpha^{-1}$.
By Lemma~\ref{lem:RC}(vi), the Garside normal form of $\alpha^{-1}$ is
$\delta^{m-|P|}\tau^{m-|P|}(P^*)$.
Let $n=|P|-m$ and $Q=\tau^{m-|P|}(P^*)$. Then
$$
\alpha^{-1}=\delta^{-n}Q
$$
is the Garside normal form.
Because $|P|=|Q|$ and $m\ge 2k+1$, we have
$$ n= |P|-m \le |P|-(2k+1)=|Q|-(2k+1). $$
By Lemma~\ref{lem:RC},
$$
(a_1^2a_2\cdots a_{2k-1}a_{2k}^2)^*
=(a_1\cdot a_1a_2\cdots a_{2k-1}a_{2k}\cdot a_{2k})^*
=a_{2k+2}a_{2k+3}^{2k}a_{2k+4}.
$$
Applying $\tau$ if necessary, we may assume that
$$
a_1a_2^{2k}a_3\prec Q.
$$

So far, we have seen that $\alpha^{-1}$ satisfies the condition of (i).
Hence $\alpha^{-1}$ is conjugate to $a_1a_2^{2k}a_1^{-1}W$ which is a
shortest word in its conjugacy class.
Therefore $\alpha$ is conjugate to
$W^{-1}a_1a_2^{-2k}a_1^{-1}$
and hence to $a_1a_2^{-2k}a_1^{-1}W^{-1}$
which is also a shortest word in its conjugacy class.

\medskip
(iii)\ \
Put $k=1$ in the proof of (ii).
\end{proof}

\begin{proof}[Proof of Theorem~\ref{thm:equal}]
Let $K$ be represented by a 3-braid
$$\alpha=\delta^u a_1^{k_1}a_2^{k_2}\cdots a_r^{k_r},$$
where $u=\inf(\alpha)$ and $k_1,k_2,\ldots,k_r\ge 1$.
We may assume $\alpha\in[\alpha]^S_0$.
We will show that either $u(K)<g(K)$ holds
by Lemma~\ref{lem:B} or
$K$ is one of the knots listed in the theorem.

\medskip
If $u\ge 0$, then $\alpha$ is a positive braid.
If $r=0$, then $\alpha=\delta^u$, hence either $\alpha$ or $\alpha^{-1}$ is a positive braid.
In these cases, either $K$ or $!K$ is strongly quasipositive.
Therefore, for the proof, we may assume the following.
\begin{quote}\em
Neither $K$ nor $!K$ is a strongly quasipositive knot.\\
In particular,
$u=\inf(\alpha)\le -1$ and $r=\syl(\alpha)\ge 1$.
\end{quote}

\case{1}{$r=1$}
By Lemma~\ref{lem:A1}, $\alpha=\delta^u a_1^{2p}$ for some $p\ge 1$
and $\esyl(\alpha)=u+1\equiv 0\bmod 3$.
Since we have assumed $u\le -1$, there is an integer $m\ge 0$ such that
$$
\alpha=\delta^{-(3m+1)}a_1^{2p}.
$$

If $m=0$, then $\alpha=\delta^{-1}a_1^{2p}=(a_1a_3)^{-1}a_1^{2p}=a_3^{-1}a_1^{2p-1}$,
hence $K$ is a 2-braid knot.

If $3m+1\ge 2p$, then $\alpha^{-1}$ is a positive braid,
hence $!K$ is strongly quasipositive.

Therefore we may assume
\begin{equation}\label{eq:sylen1}
m\ge 1\quad\mbox{and}\quad 3m+1\le 2p-1.
\end{equation}
In particular, $2p\ge 3m+2\ge 5$ and $3m+1\ge 4$.
Notice that $\alpha$ is conjugate to
$$ a_2^{-2}\alpha a_2^2=
a_2^{-2}\delta^{-(3m+1)}a_1^{2p} a_2^2
=\delta^{-(3m+1)}a_1^{2p-2}a_2^2,
$$
which is also a summit element.
Since $a_1^2a_2^2\prec a_1^{2p-2}a_2^2$
and $3m+1\ge 4\ge 3$, we have $u(K)<g(K)$ by Lemma~\ref{lem:B}.

\case{2}{$r=2$}
In this case, $\alpha=\delta^u a_1^{k_1} a_2^{k_2}$.
By Lemma~\ref{lem:A}, $\esyl(\alpha)=u+2\equiv0\bmod 3$ and either $k_1$ or $k_2$ is even.
In fact, both $k_1$ and $k_2$ are even.
(Otherwise, $\alpha$ has the same induced permutation as $\delta a_1$ or $\delta a_2$
which has two cycles. This contradicts the hypothesis that $K$ is a knot.)
Since we have assumed $u\le -1$, there are integers $m\ge 0$ and $p,q\ge 1$
such that
$$
\alpha=\delta^{-(3m+2)}a_1^{2p}a_2^{2q}.
$$
In particular, $a_1^2a_2^2\prec a_1^{2p}a_2^{2q}$.

If $m\ge 1$, then $3m+2\ge 5\ge 3$, hence $u(K)<g(K)$ by Lemma~\ref{lem:B}.
Therefore we may assume that $m=0$, hence
$$\alpha=\delta^{-2}a_1^{2p}a_2^{2q}.$$

If $p=q=1$, then $\alpha=\delta^{-2}a_1^2a_2^2=a_1^{-1}a_2a_1^{-1}a_2$,
hence $K$ is the figure-eight knot.

Let $p\ge 2$.
Then $\alpha=\delta^{-2}a_1^{2p}a_2^{2q}$ is conjugate to
$$
a_3^{-1}\alpha a_3=\delta^{-2}a_1^{-1} a_1^{2p}a_2^{2q} a_3=\delta^{-2}a_1^{2p-1}a_2^{2q}a_3,
$$
which is also a summit element.
Let $P=a_1^{2p-1}a_2^{2q}a_3$.
Then $a_1a_2^{2q}a_3\prec P$ and $|P|-(2q+1)=2p-1\ge 3$,
hence $u(K)<g(K)$ by Lemma~\ref{lem:B}.
The same argument works for the case $q\ge 2$.

\case{3}{$u=-1$ or $u=-2$}
Let $m=-u$, hence $m=$ 1 or 2.
Due to Cases 1 and 2, we may assume that $r\ge 3$.
Because $\esyl(\alpha)=(-m)+r\equiv 0\bmod 3$ by Lemma~\ref{lem:A},
we have $r\equiv m\not\equiv 0\bmod 3$.
In particular, $r\ge 4$.
Let $P=a_1^{k_1}a_2^{k_2}\cdots a_r^{k_r}$, hence
$\alpha=\delta^{-m}P$.
By Lemma~\ref{lem:A}, some $k_i$ is even.
Taking a positive conjugate if necessary,
we may assume that $k_2=2p$ for some $p\ge 1$.
Since $a_1a_2^{2p}a_3\prec P$ and
$$|P|-(2p+1)=k_1+k_3+\cdots+k_r-1\ge r-2\ge 2\ge m,$$
we have $u(K)<g(K)$ by Lemma~\ref{lem:B}.

\case{4}{General Case: $r\ge 3$ and $u\le -3$}
Due to the previous cases, we may assume that
any conjugate of $\alpha$ and $\alpha^{-1}$
has syllable length $\ge 3$ and infimum $\le -3$,
and hence that
$$
\alpha=\delta^{-m}a_1^{2p}Q_1\underbrace{a_{i+1}\cdots a_{i+2q-2}}_{2q-2} Q_2\in[\alpha]^S_0
$$
for $p,q\ge 1$, $m\ge 3$ and positive words $Q_1$ and $Q_2$
with the properties in Corollary~\ref{cor:general}.
Let
$$Q=Q_1a_{i+1}\cdots a_{i+2q-2}Q_2
\quad\mbox{and}\quad
P=a_1^{2p}Q.
$$
In particular, $|P|=|Q|+2p$ and $\syl(\alpha)=\syl(P)=\syl(Q)+1$.

Recall from Corollary~\ref{cor:general} that, for each $j=1,2$,
if $|Q_j|\neq 0$ then $|Q_j|\ge 2$.

\medskip\noindent\emph{Claim.\ If $m\le |Q|-1$, then $u(K)<g(K)$.}

\begin{proof}[Proof of Claim]
Since $\syl(\alpha)\ge 3$ and $\syl(\alpha)=\syl(Q)+1$,
we have $\syl(Q)\ge 2$.
Decompose the word $Q$ into $Q=R_1R_2$,
where $R_1$ and $R_2$ are nonempty subwords of $Q$ with $\syl(Q)=\syl(R_1)+\syl(R_2)$.
Let
$$ R=\tau^{-m}(R_2)a_1^{2p}R_1\quad\mbox{and}\quad\beta=\delta^{-m}R.$$
Then $\beta$ is a positive conjugate of $\alpha$ such that $\beta\in[\alpha]^S_0$.
Hence
$$
\syl(R)=\syl(\beta)=\syl(\alpha)=\syl(P)=\syl(Q)+1=\syl(R_1)+\syl(R_2)+1.
$$
Therefore $R_1$ starts with $a_2$ and $\tau^{-m}(R_2)$ ends with $a_3$,
hence $a_3a_1^{2p}a_2\prec R$.
Since
$$m\le |Q|-1\le |P|-(2p+1) = |R|-(2p+1),$$
we have $u(K)< g(K)$ by Lemma~\ref{lem:B}.
\end{proof}

Due to the above claim, we assume $m\ge |Q|$ henceforth.

\case{4.1}{$|Q_1|=2$ or $|Q_2|=2$}
Let
$$R=\tau^{-m}(Q_2)a_1^{2p}Q_1 a_{i+1}\cdots a_{i+2q-2}
\quad\mbox{and}\quad\beta=\delta^{-m}R.$$
Then $\beta$ is a positive conjugate of $\alpha$ such that $\beta\in[\alpha]^S_0$.
Using the same argument as in the proof of the above claim, we have
$$\syl(\tau^{-m}(Q_2)a_1^{2p}Q_1)=\syl(Q_2)+1+\syl(Q_1).$$
If $|Q_1|=2$, then $Q_1=a_2^2$, hence $a_1^2 a_2^2 \prec a_1^{2p}Q_1\prec R$.
If $|Q_2|=2$, then $Q_2=a_{i+2q-1}^2$, hence
$\tau^{-m}(Q_2)=a_3^2$ and $a_3^2a_1^2 \prec \tau^{-m}(Q_2) a_1^{2p}\prec R$.
In both cases, we have $u(K)< g(K)$ by Lemma~\ref{lem:B} because $m\ge 3$.

\case{4.2}{$|Q_1|\ge 3$ or $|Q_2|\ge 3$}
Observe that
$$m\ge |Q|=|Q_1|+|Q_2|+(2q-2)\ge 3+(2q-2)=2q+1.$$

Suppose $|Q_2|\ge 3$.
Then $Q_2$ starts with $a_{i+2q-1}^2$ and
$a_1^{2p}Q_1$ ends with $a_i^2$.
(If $Q_1$ is the empty word, then $a_1^{2p}Q_1=a_1^{2p}$ and $i=1$.)
Therefore
$$a_i^2 a_{i+1} a_{i+2}\cdots a_{i+2q-2}
a_{i+2q-1}^2 \prec P.$$
Since $m\ge 2q+1$, we have $u(K)< g(K)$ by Lemma~\ref{lem:B}.

Suppose $|Q_1|\ge 3$. Let
$$
R=Q_1 a_{i+1}\cdots a_{i+2q-2} Q_2\tau^m(a_1^{2p})
\quad\mbox{and}\quad
\beta=\delta^{-m}R.
$$
Then $\beta$ is a positive conjugate of $\alpha$ such that $\beta\in[\alpha]^S_0$.
Because $|Q_1|\ge 3$, $Q_1$ ends with $a_i^2$.
Observe that $Q_2\tau^m(a_1^{2p})$ starts with $a_{i+2q-1}^2$.
(If $Q_2$ is the empty word, then $Q_2\tau^m(a_1^{2p})=\tau^m(a_1^{2p})=a_{i+2q-1}^{2p}$.)
Therefore
$$a_i^2 a_{i+1} a_{i+2}\cdots a_{i+2q-2}
a_{i+2q-1}^2 \prec R.$$
Since $m\ge 2q+1$, we have $u(K)< g(K)$ by Lemma~\ref{lem:B}.

\case{4.3}{$|Q_1|=|Q_2|=0$}
In this case, $|Q|=2q-2$ and
$$\alpha=\delta^{-m}P=\delta^{-m} a_1^{2p} a_2\cdots a_{2q-1}.$$
Since $\alpha\in[\alpha]^S_0$ and $\syl(\alpha)\ge 3$,
$\esyl(\alpha)=-m+2q-1\equiv 0\bmod 3$ by Lemma~\ref{lem:xu},
hence
$$
m\equiv 2q-1\bmod 3.
$$

We first claim that $p,q\ge 2$.
Since $\syl(\alpha)=2q-1\ge 3$, we have $q\ge 2$.
Observe that
$$P^*=(a_1^{2p} a_2\cdots a_{2q-1})^*
= a_{2q+1}^{2q-1} a_{2q+2}a_{2q+3}\cdots a_{2q+2p}.$$
By Lemma~\ref{lem:RC}(vi),
$\syl(\alpha^{-1})=\syl(P^*)=2p$.
Since $\syl(\alpha^{-1})\ge 3$, we have $p\ge 2$.

Let $R=a_1^{2p-2} a_2\cdots a_{2q-1} \tau^m(a_1^{2})$ and $\beta=\delta^{-m}R$.
Then $\beta$ is a positive conjugate of $\alpha$, hence $\beta\in[\alpha]^S$.
Since $m\equiv 2q-1\bmod 3$, $\tau^m(a_1^2)=a_{2q}^2$.
Hence
$$
\beta=\delta^{-m}R=\delta^{-m}a_1^{2p-2} a_2\cdots a_{2q-1}a_{2q}^2.
$$
Since $p\ge 2$, we have
$$a_1^{2} a_2\cdots a_{2q-1} a_{2q}^2 \prec R.$$
Therefore, if $m\ge |Q|+3$, then $m\ge|Q|+3=2q+1$, hence $u(K)< g(K)$
by Lemma~\ref{lem:B}.

Suppose that $m\le |Q|+2$.
Because we have assumed $m\ge |Q|$,
$|Q|\le m\le |Q|+2$, that is,
$$2q-2\le m\le 2q.$$
Since $m\equiv 2q-1 \bmod 3$, the above inequalities imply $m=2q-1$.
Consequently,
$$\alpha=\delta^{-2q+1}a_1^{2p} a_2\cdots a_{2q-1},$$
which is
conjugate to
$a_1^{2p} a_2\cdots a_{2q-1}\delta^{-2q+1}=a_1^{2p-1} a_2^{-2q+1}$,
hence $K$ is a connected sum of two 2-braid knots.
\end{proof}

\section*{Acknowledgments}

The first author was supported by Basic Science Research Program
through the National Research Foundation of Korea (NRF)
funded by the Ministry of Education, Science and Technology (2012R1A1A3006304).
The second author was supported by Basic Science Research Program
through the National Research Foundation of Korea (NRF) funded by
the Ministry of Education, Science and Technology (2013-014376).

\end{document}